\newtheoremstyle{noindent}{4ex}{4ex}{}{0ex}{\sc}{.}{0.5em}{}
\theoremstyle{noindent}
\newtheorem{thm}{Theorem}[section]
\newtheorem{lemma}[thm]{ Lemma}
\newtheorem{prop}[thm]{ Proposition}
\newtheorem{assumption}[thm]{ Assumption}
\newtheorem{defnumb}[thm]{ Definition}
\newtheorem{defn}{Definition}
\newtheoremstyle{noindent2}{0ex}{0ex}{}{0ex}{\sc}{\hspace*{-0.3em}~}{0.5em}{}
\theoremstyle{noindent2}
\newtheorem{enumer}[thm]{}
\def\swappedhead#1#2#3{% !!! The -0.25em below is a complete kludge
  \thmnumber{\@upn{\@secnumfont\hspace*{-0.25em}#2\@ifnotempty{#1}{~~~}}}%
  \thmname{#1}%
  \thmnote{{\the\thm@notefont(#3)}}}
\def\endmathdisplay@a{%
  \if@eqnsw \gdef\df@tag{\theequation} \fi
  \if@fleqn \@xp\endmathdisplay@fleqn
  \else \ifx\df@tag\@empty \else \veqno \alt@tag \df@tag \fi
    \ifx\df@label\@empty \else \@xp\ltx@label\@xp{\df@label}\fi
  \fi
  \ifnum\dspbrk@lvl>\m@ne
    \postdisplaypenalty -\@getpen\dspbrk@lvl
    \global\dspbrk@lvl\m@ne
  \fi
}
\renewenvironment{equation}{%
  \stepcounter{thm}
%  \typeout{Help: section=\arabic{section},thm=\arabic{thm}}
  \incr@eqnum
  \mathdisplay@push
  \st@rredfalse \global\@eqnswtrue
  \mathdisplay{equation}%
}{%
  \endmathdisplay{equation}%
  \mathdisplay@pop
  \ignorespacesafterend
}
\renewcommand{\theequation}{
    \arabic{section}.\arabic{thm}%
    }
\newcommand{\cl}{\text{cl}}
\newcommand{\reals}{\mathbb{R}}
\newcommand{\rationals}{\mathbb{Q}}
\newcommand{\Prob}{\mathbb{P}}
\newcommand{\Exp}{\mathbb{E}}
\newcommand{\B}{\mathcal{B}}
\newcommand{\R}{\mathcal{R}}
\newcommand{\C}{\mathcal{C}}
\newcommand{\HH}{\mathcal{H}}
\newcommand{\G}{\mathcal{G}}
\newcommand{\E}{\mathcal{E}}
\newcommand{\W}{\mathcal{W}}
\newcommand{\F}{\mathcal{F}}
\newcommand{\ds}{\displaystyle}
\let\origcup=\cup
\let\origcap=\cap
\renewcommand{\cup}{\ds \mathop{\origcup}}
\renewcommand{\cap}{\ds \mathop{\origcap}}
\def\Ddots{\mathinner{\mkern1mu\raise\p@
\vbox{\kern7\p@\hbox{.}}\mkern2mu
\raise4\p@\hbox{.}\mkern2mu\raise7\p@\hbox{.}\mkern1mu}}
\begin{document}

\title{Constructing Strong Markov Processes}

\author{Robert J. Vanderbei}
\address{Dept. of Mathematics,
Univ. of Illinois,
Champaign-Urbana, IL.
}
\dedicatory{Dedicated to the memory of Lynda Singshinsuk.}

\date{Original draft: August 1984.  Converted to LaTeX: \today}

\thanks{Original version is archived at \\
%	\href{http://orfe.princeton.edu/~rvdb/tex/StrongMarkovProcesses/ConstructingStrongMarkovProcesses.pdf}{\url{http://orfe.princeton.edu/~rvdb/tex/StrongMarkovProcesses/ConstructingStrongMarkovProcesses.pdf}}
\hspace*{0.6in}{http://orfe.princeton.edu/$\sim$rvdb/tex/StrongMarkovProcesses/CSMP.pdf}
}

\subjclass{ 60J25, 47D07 }

\keywords{strong Markov property, Ray-Knight compactification, 
	completion of state space}

\begin{abstract}
The construction presented in this paper can be briefly described as follows:
starting from any ``finite-dimensional'' Markov transition function $p_t$, on a
measurable state space $(E,\B)$, we construct a strong Markov process on a
certain ``intrinsic'' state space that is, in fact, a closed subset of a finite
dimensional Euclidean space $\reals^d$. Of course we must explain the meaning of
finite-dimensionality and intrinsity. Starting with $p_t$, we consider the range of
the nonnegative bounded measurable functions under the action of the resolvent.
This class of functions induces a uniform structure on $E$. We say that $E$ is
finite-dimensional if this uniformity is finitely generated. In such cases we
then map $E$ into $\reals^d$. The intrinsic state space is the closure of the range of
this mapping. On this enlarged state space we construct a strong Markov process,
which corresponds quite naturally to $p_t$.  We give several examples including the
usual examples of nonstrong Markov process. 
\end{abstract}

\maketitle

\section{Introduction and notation} \label{sec1}

Every student of probability learns very early that not every Markov process
possesses the strong Markov property. And, as is often the case, the simplest
pathological examples are very simple indeed. For instance, the process that
moves deterministically to the left on the real line with unit speed except that
when it reaches zero it pauses there for an exponential amount of time before
continuing, is perhaps the best such example. The fact that it is a Markov
process follows from the memoryless property of the exponential random variable.
To see that it is not a strong Markov process, consider the first hitting time
of the open left half-line. Starting from this time, the process proceeds
immediately into the left half-line. However, at this random time the process is
situated at the origin and consequently if it had the strong Markov property it
would have to remain there for an exponential amount of time. The fact that it
does not shows that it is not strong Markov. 

Of course the above pathology can be remedied by splitting $0$ into two points
$0^+$
and $0^-$. If we put the exponential alarm clock at $0^+$ and have the process
immediately appear at $0^-$ when the clock rings (i.e. a right continuous
		trajectory) then in this enlarged state space the process is
strong Markov. 

This idea of enlarging the state space first appeared in the paper \cite{Yus57} by
Yushkevich. Shortly after that, D. Ray published a fundamental paper \cite{Ray59} in
which he introduced a certain compactification of the state space on which the
Markov process becomes what is today called a Ray process (which has the strong
Markov property). Ray's methods were clarified and improved by
F. Knight \cite{Kni65}. The short monograph \cite{Get75} by Getoor has become the standard
reference on Ray processes and the Ray-Knight compactification. However, the
most general results appeared later in the papers
\cite{Eng77a,Eng77b,Eng77c,Eng77d,Eng77e,Eng77f,Eng77g} of H. J. Engelbert. His
basic assumption is that there exists a certain class of nonnegative bounded
measurable functions, on a countably generated measurable state space, that
forms a separable Ray cone (see Definition 12 in \cite{Eng77c}). 

The starting point of this paper is a Markov transition function on a measurable
state space. We introduce in a canonical way a certain class $R_+$ of functions
that satisfy most of the requisite properties of a Ray cone. The class $R_+$ is
generally not separable (even for the above example) but is usually ``finite
dimensional.'' This means that there exists a finite 
collection $E = \{ e_1,e_2,\ldots,e_d \}$ of real-valued 
functions (not necessarily bounded) such that, for every
sequence of points $x_n$ in the state space $E$, $f(x_n)$ is Cauchy for every 
$f \in R_+$
if and only if $e_j(x_n)$ is Cauchy for $i = 1,2, \ldots ,d$. 
By thinking of the functions $e_j$ as coordinate
functions, the map $x \rightarrow (e_1(x),e_2(x), \ldots ,e_d(x))$ 
embeds the state space in $\reals^d$.
We then take the closure of the range of this mapping to get our intrinsic state
space. Thinking in terms of our original state space E, this closure corresponds
to a completion of the state space relative to the uniformity generated by $R_+$
(for a general discussion of uniformization see e.g. \cite{Kel65} Chapter VI). On the
resulting closed space we construct a strong Markov process that corresponds
naturally to the original Markov transition function. 

The advantage of our approach is that the construction is canonical. The 
Ray-Knight compactification, on the other hand, depends on the separable Ray cone
with which one starts. In fact a substantial part of the theory of the
Ray-Knight compactification consists in studying to what extent the new state
space depends on the Ray cone (see e.g. \cite{Get75}, Chapter 15). 

The main goal of this paper is the construction of a process with the strong
Markov property. Along the way we get some other results; e.g. right continuity
and the existence of left limits 
(recently this has been called cadlaguity---a gauche term indeed). 
Since the state space we construct is a
closed subset of $\reals^d$, the further sample path properties are for the most part
immediate consequences of already well known results and so they will not be
pursued here. 

The remainder of this section is devoted to establishing notations that will be
used throughout the rest of the paper. 
\begin{defn}
A {\em kernel} $k$ on a measurable space $(E,\B)$ is a real-valued function of 
$E \times \B$ satisfying: 
\begin{enumer}
\label{1.1a} $x \rightarrow k(x,B)$ is $\B$-measurable; 
\end{enumer}
\begin{enumer}
\label{1.1b} $B \rightarrow k(x,B)$ is a signed measure having finite total variation measure
$|k|(x,\cdot)$. 
\end{enumer}
\end{defn}
The formula 
\begin{equation} \label{1.3}
	\| k \| = \sup_x |k|(x,E) 
\end{equation}
defines a norm on the space $K$ of all kernels for which \eqref{1.3} is finite. It is
easy to check that $K$ is a Banach space. The formula 
\[
	k * k'(x,B) = \int k(x,dy)k'(y,B) 
\]
makes $K$ into an algebra. We denote by $k^{(n)}$ the $n^{\text{th}}$ power 
of the kernel $k$. We also put 
\[
	k^{(0)}(x,B) = 1(x,B) = 
	\left\{
	\begin{array}{ll}
		1 & x \in B  \\
		0 & x \not \in B. 
	\end{array}
	\right.
\]
It is easy to see that $\|k * k'\| \le \|k\| \|k'\|$ and hence that 
$\|k^{(n)}\| \le \|k\|^n$. 
A kernel $k$ is called a {\em stochastic kernel} if, for every $x \in E$, 
$k(x,\cdot)$ is a probability measure. 

If $(E,\B)$ is a measurable space, we also denote by $\B$ the Banach space of bounded
$\B$-measurable functions on $E$ and we denote by $\B^+$ those $f \in \B$ that are
nonnegative. The norm on $\B$ is given by $\|f\| = \sup_{x \in E} |f(x)|$.
For every kernel (say $k$) on $(E,\B)$, we denote by the corresponding upper case
letter ($K$ in this case) the linear operator on $\B$ defined by 
\[
	Kf(x) = \int k(x,dy)f(y). 
\]
If $B \rightarrow k(x,B)$ is a positive measure then $K$ is a positive operator 
and if $k$ is a stochastic kernel then $K1 = 1$ and $\|Kf\| \le \|f\|$. 

If $E$ is a subset of $\reals^d$, we denote by $C^k(E)$ the collection of all bounded
continuous functions on $E$ that have limits on the closure of $E$ and that
are $k$
times continuously differentiable with each derivative being bounded. We will
write $C(E)$ instead of $C^0(E)$. The notation $f \in C^k(E_1,E_2, \ldots ,E_n)$ 
indicates that, for each $j$, the function $f$ restricted to $E_j$ is of class 
$C^k(E_j)$. The
notation $C_0(E)$ denotes those functions in $C(E)$ that tend to zero at infinity.
Hence if $E$ is bounded, $C_0(E) = C(E)$. Finally $\overline{E}$ denotes the closure
of $E$. 

\section{The intrinsic state space} \label{sec2}

\begin{defn}
A family of kernels $(p_t)_{t>0}$ on a measurable space $(E,\B)$ is called a
{\em Markov transition function} if: 
\begin{enumer}
\label{2.1} $(t,x) \rightarrow p_t(x,B)$ is $\B(\reals^+) \times \B$ measurable; 
\end{enumer}
\begin{enumer}
\label{2.2} every $p_t$ is a stochastic kernel; 
\end{enumer}
\begin{enumer}
\label{2.3} $p_{s+t} = p_s * p_t$. 
\end{enumer}
\end{defn}

The space $(E,\B)$ is called the {\em state space} for $p_t$. 

\begin{defn}
A family of kernels $(u^{\alpha})_{\alpha>0}$ on a measurable space $(E,\B)$ is 
called a {\em Markov resolvent} if 
\begin{enumer}
\label{2.4} for every $\alpha > 0$, $\frac{1}{\alpha} u^{\alpha}$ is a stochastic kernel; 
\end{enumer}
\begin{enumer}
\label{2.5} $u^{\beta} - u^{\alpha} = (\alpha - \beta) u^{\alpha} * u^{\beta}$.
\end{enumer}
\end{defn}

If $p_t$ is a Markov transition function, the formula 
\[
    u^{\alpha} = \int_0^{\infty} e^{- \alpha t} p_t dt
\]
defines a Markov resolvent. 

For the rest of this section, let $p_t$ denote a fixed
Markov transition function on a fixed state space $(E,\B)$ and let $u^{\alpha}$ be the
corresponding resolvent. We put 
\[
    \C = U^{\alpha} \B .
\]
To see that the right hand side does not depend on $\alpha$ (as long as 
$\alpha > 0$) rewrite property \eqref{2.5} as 
$u^{\beta} = u^{\alpha} * [1 + (\alpha-\beta)u^{\beta}]$. 
Hence, if $f = U^{\beta}g$ and $g \in \B$, then 
$f = U^{\alpha}h$ where
$h = g + (\alpha - \beta)U^{\beta}g$ clearly also belongs to $\B$. Put 
\[
    \C^+ = \mathop{\cup}_{\alpha > 0} U^{\alpha} \B^+ . 
\]
The following properties of $\C$ and $\C^+$ are easily verified: 

\begin{enumer}
\label{2.6} $U^{\alpha} \C^+ \subset \C^+$, $U^{\alpha} \C \subset \C$; 
\end{enumer}
\begin{enumer}
\label{2.7} $\C^+$ is a convex cone, $\C$ is a linear space; 
\end{enumer}
\begin{enumer}
\label{2.8} $\C^+$ contains the nonnegative constants; 
\end{enumer}
\begin{enumer}
\label{2.9} $\C = \C^+ - \C^+$. 
\end{enumer}

\begin{defn}
A function $f$ is called {\em $\alpha$-supermedian} if $f \in \B^+$ and 
$f \ge (\beta - \alpha) U^{\beta} f$ for all $\beta > 0$. 
\end{defn}

\begin{prop}
\label{2.10} The following properties hold:  \\
(i) every $f \in \C^+$ is $\alpha$-supermedian for some $\alpha > 0$; \\
(ii) for every $f \in \C$, $\lim_{t \downarrow 0} \| P_t f -f \| = 0$. 
\end{prop}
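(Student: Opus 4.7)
For part (i), let $f \in \C^+$, so $f = U^\alpha g$ for some $\alpha > 0$ and $g \in \B^+$. I would directly exploit the resolvent identity \eqref{2.5}. Applying the operator identity $U^\alpha - U^\beta = (\beta - \alpha)\,U^\beta U^\alpha$ (which is \eqref{2.5} with the roles of $\alpha$ and $\beta$ exchanged) to $g$ gives
\[
    U^\alpha g - (\beta - \alpha)\, U^\beta U^\alpha g \;=\; U^\beta g.
\]
The right-hand side equals $U^\beta g$, and since $g \ge 0$ and $U^\beta$ is a positive operator (from property \eqref{2.4}), we conclude $f - (\beta - \alpha) U^\beta f = U^\beta g \ge 0$, which is exactly the $\alpha$-supermedian inequality. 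This holds for all $\beta > 0$.

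For part (ii), the key observation is that $\C = \C^+ - \C^+$ by \eqref{2.9}, and more usefully, $\C = U^\alpha \B$ for any fixed $\alpha > 0$, as noted before \eqref{2.6}. So it suffices to prove the claim for $f = U^\alpha g$ with $g \in \B$. The strategy is to rewrite $P_t U^\alpha g$ as a tail integral using the semigroup property and Fubini:
\[
    P_t U^\alpha g \;=\; \int_0^\infty e^{-\alpha s} P_{t+s} g\, ds \;=\; e^{\alpha t}\!\int_t^\infty e^{-\alpha u} P_u g\, du \;=\; e^{\alpha t}\Bigl(U^\alpha g - \int_0^t e^{-\alpha u} P_u g\, du\Bigr).
\]
Subtracting $U^\alpha g$ and applying the uniform bound $\|P_u g\| \le \|g\|$ (since each $P_u$ is a contraction on $\B$) gives
\[
    \|P_t U^\alpha g - U^\alpha g\| \;\le\; (e^{\alpha t}-1)\|U^\alpha g\| + e^{\alpha t}\cdot \frac{1-e^{-\alpha t}}{\alpha}\,\|g\| \;\le\; \frac{2(e^{\alpha t}-1)}{\alpha}\,\|g\|,
\]
which tends to $0$ as $t \downarrow 0$.

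Neither step presents a real obstacle; part (i) is a one-line application of \eqref{2.5}, and part (ii) is a standard resolvent-to-semigroup strong continuity argument. The only subtlety worth flagging is that one genuinely needs $g \in \B$ (not merely $f \in \C$) to get a quantitative rate, which is why one first reduces via $\C = U^\alpha\B$ rather than working with $f$ abstractly; if one instead tried to bound $\|P_t f - f\|$ directly from $f$ being $\alpha$-supermedian, the norm convergence would not be immediate.
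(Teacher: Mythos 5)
Your proof is correct and takes essentially the same route as the paper: part (i) is read off directly from the resolvent identity \eqref{2.5} applied to $f=U^{\alpha}g$ with $g\in\B^+$, and part (ii) rests on the same tail-integral identity $P_tU^{\alpha}g=e^{\alpha t}\bigl(U^{\alpha}g-\int_0^te^{-\alpha u}P_ug\,du\bigr)$ that underlies the paper's estimate $\|P_tg-g\|\le(1-e^{-\alpha t})\|g\|+t\|f\|$. Your quantitative bound $\frac{2(e^{\alpha t}-1)}{\alpha}\|g\|$ is a fine (and sign-correct) repackaging of the same computation.
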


\begin{proof}
Property (i) is an immediate consequence of the definition of $\C^+$ and the
resolvent property \eqref{2.5}. To prove (ii), put $g = U^{\alpha}f$. It is not hard to show
that 
\[
    e^{-\alpha t} P_t g - g = \int_0^t e^{- \alpha s} P_s f ds.
\]
Hence 
\[
    0 \le \| e^{-\alpha t} P_t g - g \| \le t \| f \|,
\]
and so 
\begin{eqnarray*}
    0 & \le & \| P_t g - g \| \\
      & \le & \| P_t g - e^{-\alpha t} P_t g \| + \| e^{-\alpha t} P_t g - g \| \\
      & \le & (1 - e^{-\alpha t}) \| g \| + t \| f \| .
\end{eqnarray*}
Letting t tend to zero establishes (ii). 
\end{proof}

It is easy to see that the minimum of two $\alpha$-supermedian functions is again
$\alpha$-supermedian. Also convex combinations of $\alpha$-supermedian functions are again
$\alpha$-supermedian.

\begin{thm}
\label{2.11} There exists a unique minimal convex cone $\R_+$ such that:  
\begin{enumerate}
\item $\C^+ \subset \R_+ \subset \B^+$; 
\item for $\alpha > 0$, $U^{\alpha}\R_+ \subset \R_+$; 
\item $f,g \in \R_+$ implies that $f \wedge g \in \R_+$. 
\end{enumerate}
In addition, every $f \in \R_+$ is $\alpha$-supermedian for some $\alpha > 0$. 
\end{thm}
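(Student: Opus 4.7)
My plan is to construct $\R_+$ as the intersection of the family $\F$ of all convex cones satisfying (1)--(3), and then to establish the supermedian claim by exhibiting the collection of functions that are $\alpha$-supermedian for some $\alpha > 0$ as itself a member of $\F$.

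For existence and uniqueness I would first check that $\F$ is nonempty: the cone $\B^+$ itself lies in $\F$, since $\C^+ \subset \B^+$ is immediate, $U^\alpha$ sends bounded nonnegative functions to bounded nonnegative functions (being a bounded positive operator), and $f \wedge g \in \B^+$ whenever $f,g \in \B^+$. I would then put
\[
    \R_+ \;=\; \bigcap_{\R' \in \F} \R'.
\]
An intersection of convex cones is a convex cone, and each of (1)--(3) is preserved under intersection: if $f$, or respectively $f$ and $g$, lies in every $\R' \in \F$, then $U^\alpha f$ and $f \wedge g$ lie in every $\R'$ as well. Thus $\R_+ \in \F$, and since it is contained in every member of $\F$ by construction, it is the unique minimal element.

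For the supermedian conclusion I would define
\[
    \mathcal{S}^+ \;=\; \{\, f \in \B^+ : f \text{ is $\alpha$-supermedian for some } \alpha > 0 \,\}
\]
and show $\mathcal{S}^+ \in \F$; minimality of $\R_+$ then forces $\R_+ \subset \mathcal{S}^+$, which is precisely the last assertion of the theorem. The inclusion $\C^+ \subset \mathcal{S}^+$ is Proposition~\ref{2.10}(i); closure under $U^\beta$ is automatic because $U^\beta \mathcal{S}^+ \subset U^\beta \B^+ \subset \C^+ \subset \mathcal{S}^+$; and the convex cone and $\wedge$-closure properties reduce to the observation recorded just before the theorem that sums, positive multiples, and pointwise minima preserve $\alpha$-supermedianity for a fixed $\alpha$. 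To reconcile two elements of $\mathcal{S}^+$ that a priori are supermedian at different parameters, I would invoke the monotonicity lemma that if $f$ is $\alpha$-supermedian then $f$ is $\alpha'$-supermedian for every $\alpha' \ge \alpha$: for $\beta \le \alpha'$ the required inequality $f \ge (\beta - \alpha') U^\beta f$ is trivial from $f \ge 0$, while for $\beta > \alpha'$ it follows from $(\beta - \alpha') U^\beta f \le (\beta - \alpha) U^\beta f \le f$.

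The only genuinely nontrivial step is this monotonicity-in-$\alpha$ observation; once it is in hand, the remainder consists of routine verification of the closure properties, and both the existence/uniqueness of $\R_+$ and the supermedian consequence drop out without further obstacle.
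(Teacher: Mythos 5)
Your proof is correct, but it builds $\R_+$ from above (as the intersection of all cones satisfying (1)--(3)), whereas the paper builds it from below: it sets $\R_0=\C^+$, recursively applies the enlargements $\R_n=\Lambda(\R_{n-1}+U(\R_{n-1}))$, and takes $\R_+=\cup_n\R_n$, proving the supermedian property by induction on $n$. The two routes trade different things. The intersection argument gives existence, uniqueness, and minimality essentially for free (once you note, as you do, that $\B^+\in\F$ so the family is nonempty), and your treatment of the supermedian claim --- exhibiting $\mathcal{S}^+$ as a competitor cone in $\F$ and invoking minimality --- is a clean packaging that avoids any induction. The paper's recursive union, by contrast, yields a generative description of $\R_+$: every element is reached from $\C^+$ by finitely many sums, resolvent applications, and pointwise minima, which is the kind of handle one wants if further properties of $\R_+$ are to be proved by induction on complexity. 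Both routes ultimately rest on the same elementary closure facts (sums, positive multiples, minima, and $U^\beta$ preserve supermedianity), and in fact both need the monotonicity-in-$\alpha$ observation to reconcile functions that are supermedian at different parameters; the paper leaves this implicit in its ``by induction on $n$'' step, while you state and prove it explicitly, which is a genuine improvement in rigor. Your verification that $U^\beta\mathcal{S}^+\subset U^\beta\B^+\subset\C^+\subset\mathcal{S}^+$ is also correct and slightly slicker than checking supermedianity of $U^\beta f$ directly.
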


\begin{proof}
If $\HH$ is any convex cone in $\B^+$, we define the usual enlargements
$U(\HH)$ and $\Lambda(\HH)$ as follows: 
\begin{eqnarray*}
    U(\HH)       & = & \{ U^{\alpha_1} f_1 + \cdots + U^{\alpha_n} f_n \; | \;
			    \alpha_j > 0, \; f_j \in \HH, \; 1 \le j \le n, \; n \ge 1 \} \\ 
    \Lambda(\HH) & = & \{ f_1 \wedge f_2 \wedge \cdots \wedge f_n \; | \;
    			    f_j \in \HH, \; 1 \le j \le n, \; n \ge 1 \}.
\end{eqnarray*}
It is easy to check that $U(\HH)$ and $\Lambda(\HH)$ are again convex cones in $\B^+$. 
Also the sum of two convex cones is again a convex cone. 

Let $\R_0 = \C^+$ and define $\R_n$ recursively by 
$\R_n = \Lambda(\R_{n-1} + U(\R_{n-1}))$.
Put $\R_+ = \cup_n \R_n$.  Note that $\R_n \subset \R_{n+1}$ for each $n \ge 0$. 
By induction on $n$, it follows that each 
$\R_n$ is a convex cone in $\B^+$ and that every $f \in \R_n$ is $\alpha$-supermedian 
for some $a > 0$. 
Hence $\R_+$ also has these properties. If $f,g \in \R_+$ then there exists an 
$n \ge 0$ with $f,g \in \R_n$.  Then 
$f \wedge g \in \R_{n+1} \subset \R_+$. 
Also, if $f \in \R_n$ then $U^{\alpha}f \in \R_{n+1} \subset \R_+$. It is obvious from the
construction that $\R_+$ is the unique minimal convex cone that satisfies
(i)-(iii). 
\end{proof}

\begin{defnumb}
\label{2.12} Two classes $\HH$ and $\G$ of real-valued functions defined on a set $E$
are said to generate the same uniformity on $E$ if, for every sequence $x_n \in
E$, the following are equivalent: 
\begin{enumerate}
\item $h(x_n)$ is a Cauchy sequence for every $h \in \HH$,
\item $g(x_n)$ is a Cauchy sequence for every $g \in \G$. 
\end{enumerate}
\end{defnumb}

We write $\HH \sim \G$ to indicate that $\HH$ and $\G$ generate the same uniformity. We say that
{\em the uniformity generated} by a class $\G$ is {\em finitely generated} if there exists a
finite collection $\HH$ such that $\HH \sim \G$. We will only consider finitely generated
uniformities (which is why we do not need to introduce nets). 

The basic example to keep in mind is the following: let $E = \reals^d$, $\HH$ be the set of
coordinate functions and $\G$ be the set of bounded continuous functions. Then 
$\HH \sim \G$
and since $\HH$ is finite, $\G$ is finitely generated. In this case $h(x_n)$ is Cauchy
for all $h \in \HH$ if and only if $x_n$ is a Cauchy sequence relative to the Euclidean
metric. 

It is easy to see that the relation $\sim$ is transitive: $\HH \sim \HH'$ and
$\HH' \sim \HH"$ implies that $\HH \sim \HH"$. We also have 

\begin{prop}
\label{2.13} If $\HH \sim \G$, then $\HH$ separates points $x$ and $y$ if and
only if $\G$ does. 
\end{prop}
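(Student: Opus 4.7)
The plan is to prove the contrapositive: if $\HH$ fails to separate some pair $x,y$ then so does $\G$, and vice versa. By the symmetry of $\sim$ it suffices to establish one direction, say that if $\G$ does not separate $x,y$ then $\HH$ does not either.

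The key idea is to detect failure of separation via an alternating test sequence. First I would assume that $g(x) = g(y)$ for every $g \in \G$ and introduce the sequence $x_1, x_2, x_3, \ldots$ defined by $x_{2k-1} = x$ and $x_{2k} = y$. For every $g \in \G$ the sequence $g(x_n)$ is then constant, hence trivially Cauchy. Invoking the hypothesis $\HH \sim \G$ from Definition \ref{2.12}, I conclude that $h(x_n)$ is Cauchy for every $h \in \HH$.

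Next I would extract the pointwise consequence: since the subsequences $(h(x_{2k-1}))_k$ and $(h(x_{2k}))_k$ are the constant sequences $h(x)$ and $h(y)$ respectively, the Cauchy property for $h(x_n)$ forces $h(x) = h(y)$. As this holds for every $h \in \HH$, the class $\HH$ does not separate $x$ from $y$ either. Reversing the roles of $\HH$ and $\G$ gives the converse.

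There is no real obstacle here; the proof is a one-shot application of the defining property applied to the simplest nontrivial sequence that oscillates between the two candidate points. The only thing to be careful about is that the definition of $\sim$ quantifies over all sequences in $E$, so the alternating sequence is admissible and the deduction is immediate.
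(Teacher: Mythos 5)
Your proof is correct and is essentially the paper's own argument: both use the alternating sequence $x,y,x,y,\ldots$, note that failure of separation makes the image sequence constant (hence Cauchy) for one class, and then invoke $\HH \sim \G$ to force the same for the other class. The only cosmetic difference is which direction you spell out, which is immaterial since the relation is symmetric.
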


\begin{proof}
If $h(x) = h(y)$ for all $h \in \HH$ then $(h(x),h(y),h(x),h(y), \ldots)$ is a Cauchy
sequence for every $h \in \HH$. Hence $(g(x),g(y),g(x),g(y), \ldots )$ is a Cauchy sequence
for every $g \in \G$ and so $g(x) = g(y)$ for all $g \in \G$. 
\end{proof}

Put $\R = \R_+ -\R_+$. We are now ready to introduce our basic assumption on $p_t$: 

\begin{assumption}
\label{2.14} There exists a finite collection $\E = \{e_1,e_2,\ldots,e_d\}$ of real-valued
functions on $E$ 
(not necessarily bounded) such that $\E \sim \R$. 
\end{assumption}

The examples in Section 4 show that this condition is often readily verifiable.
In analogy with the example given above, it is good to think of functions in $\R$ as
bounded continuous functions and those in $\E$ as coordinate functions. With this in mind
we see that \ref{2.14} is the assumption that the state space is finite dimensional. 

In assumption \ref{2.14}, it is clear that we could replace $\R$ by $\R_+$. Define a map
$\psi$ from $E$ into $\reals^d$ by 
\[
   \psi(x) = \left( e_1(x), e_2(x), \ldots, e_d(x) \right) . 
\]
The mapping $\psi$ may fail to be injective (see Example \ref{4.8}). In any case, we will
abuse language and refer to $\psi$ as an embedding of $E$ into $\reals^d$. Even
if $\psi$ is not injective, we have the following result. 

\begin{prop}
\label{2.15} If $\psi(x) = \psi(y)$ then $f(x) = f(y)$ for all $f \in \R$. 
\end{prop}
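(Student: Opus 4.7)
The plan is to apply Proposition~\ref{2.13} directly, with $\HH = \E$ and $\G = \R$. The hypothesis of that proposition is exactly the condition $\E \sim \R$, which is the content of Assumption~\ref{2.14}, so the proposition is immediately available.

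First I would unwind what $\psi(x) = \psi(y)$ means: by definition of $\psi$, this is simply the statement $e_j(x) = e_j(y)$ for every $j = 1, \ldots, d$, i.e.\ that $\E$ fails to separate the points $x$ and $y$. Next, invoking Proposition~\ref{2.13} with $\HH = \E$ and $\G = \R$, the same failure of separation transfers to $\R$: that is, $f(x) = f(y)$ for every $f \in \R$, which is exactly the conclusion.

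There is really no obstacle here, since the entire content of the statement is encapsulated by Proposition~\ref{2.13} once one observes that equality of the $\E$-coordinates is the same as non-separation by $\E$. The only small point to double-check is that Proposition~\ref{2.13} is applicable as stated to the pair $(\E,\R)$ even though the functions in $\E$ need not be bounded; inspection of the proof of Proposition~\ref{2.13} shows that boundedness plays no role --- only the Cauchy-sequence formulation of $\sim$ is used --- so the argument goes through verbatim.
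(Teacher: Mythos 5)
Your proof is correct and follows exactly the paper's route: the paper's proof of Proposition~\ref{2.15} consists solely of the remark that it is an immediate consequence of Proposition~\ref{2.13}, and you have simply spelled out that application (with $\HH = \E$, $\G = \R$, using Assumption~\ref{2.14}). The observation that boundedness of the $e_j$ is irrelevant is a reasonable extra check, consistent with how the paper uses $\sim$ throughout.
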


\begin{proof}
Immediate consequence of Proposition \ref{2.13}. 
\end{proof}

It follows that $\psi$ is injective if and only if $\R$ separates points. 

\begin{prop}
\label{2.16} Suppose that $\E = \{ e_1, e_2,\ldots,e_d \}$ and 
$\E' = \{ e_1' ,e_2', \ldots ,e_{d'}' \}$ satisfy 
\ref{2.14} and let $\psi$ and $\psi'$ be the corresponding embeddings. Then there exists a
continuous bijection between $\overline{\psi(\E)} \subset \reals^d$ and
$\overline{\psi'(\E)} \subset \reals^{d'}$ whose inverse is also continuous. 
\end{prop}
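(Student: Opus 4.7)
The plan is to exploit transitivity of $\sim$: since $\E \sim \R$ and $\E' \sim \R$, we have $\E \sim \E'$. Translated through the embeddings, this says that for any sequence $x_n \in E$, the image $\psi(x_n)$ is Cauchy in $\reals^d$ (i.e.\ each $e_i(x_n)$ is Cauchy) if and only if $\psi'(x_n)$ is Cauchy in $\reals^{d'}$. This single fact will drive the entire argument.

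First I would define a map $\phi : \psi(E) \to \psi'(E)$ by $\phi(\psi(x)) = \psi'(x)$. Well-definedness requires that $\psi(x) = \psi(y)$ forces $\psi'(x) = \psi'(y)$, which follows from Proposition \ref{2.13} applied to the pair $\E \sim \E'$ (equivalently, one can invoke Proposition \ref{2.15} twice through $\R$). Next I would extend $\phi$ to a map $\bar\phi : \overline{\psi(E)} \to \overline{\psi'(E)}$ as follows: given $z \in \overline{\psi(E)}$, pick a sequence $x_n \in E$ with $\psi(x_n) \to z$; this sequence is Cauchy in $\reals^d$, so by the equivalence $\E \sim \E'$, $\psi'(x_n)$ is Cauchy in $\reals^{d'}$ and converges to some point, which I call $\bar\phi(z)$. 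To check that $\bar\phi(z)$ does not depend on the approximating sequence, I would use the standard interleaving trick: if $\psi(x_n) \to z$ and $\psi(y_n) \to z$, then the interleaved sequence $\psi(x_1),\psi(y_1),\psi(x_2),\psi(y_2),\ldots$ is also Cauchy, hence so is $\psi'(x_1),\psi'(y_1),\psi'(x_2),\psi'(y_2),\ldots$, forcing $\lim \psi'(x_n) = \lim \psi'(y_n)$.

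The step I expect to require the most care is continuity of $\bar\phi$. Suppose $z_n \to z$ in $\overline{\psi(E)}$. By construction, for each $n$ I can choose $y_n \in E$ with
\[
   |\psi(y_n) - z_n| < 1/n \quad\text{and}\quad |\psi'(y_n) - \bar\phi(z_n)| < 1/n.
\]
Then $\psi(y_n) \to z$, so $\psi(y_n)$ is Cauchy, so $\psi'(y_n)$ is Cauchy, and by the very definition of $\bar\phi(z)$ applied to the sequence $(y_n)$, $\psi'(y_n) \to \bar\phi(z)$. Combining this with the second estimate gives $\bar\phi(z_n) \to \bar\phi(z)$, as required.

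Finally, the whole construction is symmetric in $\E$ and $\E'$, so repeating it with the roles reversed yields a continuous map $\bar\phi' : \overline{\psi'(E)} \to \overline{\psi(E)}$. On $\psi(E)$ the composition $\bar\phi' \circ \bar\phi$ is the identity by construction, and both compositions are continuous maps into Hausdorff spaces that agree on the dense subsets $\psi(E)$ and $\psi'(E)$ respectively, so they agree everywhere. Hence $\bar\phi$ is a bijection and a homeomorphism, which is exactly the claim of Proposition \ref{2.16}.
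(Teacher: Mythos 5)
Your proof is correct and follows essentially the same route as the paper's: transitivity of $\sim$ gives $\E \sim \E'$, Proposition \ref{2.13} makes $\psi' \circ \psi^{-1}$ well defined, and the preservation of Cauchy sequences lets you extend both maps to the closures. The paper compresses the extension, continuity, and mutual-inverse checks into a single sentence; you have simply supplied those routine details (interleaving for well-definedness, the $1/n$ diagonal argument for continuity, and density for the inverse identity), all of which are sound.
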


\begin{proof}
Since $\sim$ is transitive we see that $\E \sim \E'$. Hence, it follows from
proposition \ref{2.13} that 
$\psi' \circ \psi^{-1}$ and $\psi \circ (\psi')^{-1}$ are well defined. It is
also clear that $x_n$ is a Cauchy sequence in $\psi(\E)$ if and only if
$x_n' = \psi' \circ \psi^{-1}(x_n)$ is Cauchy in $\psi'(\E)$. Hence 
$\psi' \circ \psi^{-1}$ and $\psi \circ (\psi')^{-1}$ 
can both be extended to the closures of their
domains and give us the desired bijections. 
\end{proof}

Proposition \ref{2.16} shows that our construction is canonical and that the specific
choice of $\E$ is just a matter of choosing coordinate functions. 

Proposition \ref{2.15} shows that the following formula defines unambiguously a
mapping $\Psi$, from $\R$ into the space of real-valued functions on $\psi(\E)$: 
\[
	(\Psi f)(\xi) = f(\psi^{-1}(\xi)), \quad \xi \in \psi(\E), \; f \in \R. 
\]
The mapping $\Psi$ is linear and injective. Also note that $\Psi e_i$ makes sense and is
the $i^{\text{th}}$ coordinate function on $\psi(\E)$. 

\begin{prop}
\label{2.17} For every $f \in \R$, $\Psi f$ is a continuous function on
$\psi(\E)$. 
\end{prop}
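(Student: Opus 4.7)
The plan is to prove sequential continuity of $\Psi f$ at an arbitrary point $\xi = \psi(x) \in \psi(E)$, using Assumption \ref{2.14} together with an interleaving trick to upgrade the Cauchy-equivalence between $\E$ and $\R$ into preservation of limits.

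First I would fix $f \in \R$ and a sequence $\xi_n \to \xi$ in $\psi(E)$, and write $\xi_n = \psi(x_n)$ and $\xi = \psi(x)$ for some (not necessarily unique) points $x_n, x \in E$; by Proposition \ref{2.15}, the values $(\Psi f)(\xi_n) = f(x_n)$ and $(\Psi f)(\xi) = f(x)$ are independent of these choices. The convergence $\xi_n \to \xi$ in $\reals^d$ is exactly the statement that $e_i(x_n) \to e_i(x)$ for each $i = 1,\ldots,d$.

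Next I would consider the interleaved sequence
\[
    y_1 = x,\; y_2 = x_1,\; y_3 = x,\; y_4 = x_2,\; y_5 = x,\; y_6 = x_3,\; \ldots
\]
Because $e_i(x_n) \to e_i(x)$, the sequence $(e_i(y_k))_k$ is Cauchy for each $i$, i.e.\ Cauchy for every $h \in \E$. By Assumption \ref{2.14} we have $\E \sim \R$, so $(g(y_k))_k$ is Cauchy for every $g \in \R$, in particular for $g = f$. A sequence of the form $f(x), f(x_1), f(x), f(x_2), \ldots$ can be Cauchy only if $f(x_n) \to f(x)$. Therefore $(\Psi f)(\xi_n) = f(x_n) \to f(x) = (\Psi f)(\xi)$, which is the desired continuity.

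There is really no hard step here once the statement is parsed correctly: the main conceptual point, and the only place where Assumption \ref{2.14} does any work, is the interleaving device, which is the standard way to convert an equivalence between uniformities phrased via Cauchy sequences into preservation of ordinary convergence. One small thing to check is that continuity on $\psi(E) \subset \reals^d$ is indeed characterized sequentially, which is immediate from first countability of the Euclidean topology.
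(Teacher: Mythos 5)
Your proposal is correct and is essentially the paper's own proof: the paper likewise picks preimages $x_m \in \psi^{-1}(\xi_m)$, $x \in \psi^{-1}(\xi)$, forms the interleaved sequence $(e_i(x_1), e_i(x), e_i(x_2), e_i(x), \ldots)$, and invokes Assumption \ref{2.14} to conclude $f(x_m) \to f(x)$. Your added remarks on well-definedness via Proposition \ref{2.15} and on the sufficiency of sequential continuity are minor explicit checks the paper leaves implicit.
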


\begin{proof}
Suppose that $\xi_m \rightarrow \xi$. Choose $x_m$, and $x$ in $E$ 
such that $x_m \in \psi^{-1}(\xi_m)$ and $x \in \psi^{-1}(\xi)$. 
Then $(e_i(x_1),e_i(x),e_i(x_2),e_i(x),\ldots )$ is a Cauchy sequence for each
$i$. By assumption \ref{2.14}, $( f(x_1), f(x), f(x_2), f(x), \ldots)$ is a Cauchy 
sequence for every $f \in \R$. 
Hence $f(x_m) \rightarrow f(x)$ for every $f \in \R$ and so 
$\Psi f(\xi_m) \rightarrow \Psi f(\xi)$ for every $f \in \R$. 
\end{proof}

For each $f \in \R$ we can extend the definition $\Psi f$ to $\overline{\psi(\E)}$ 
by continuity. To see this, suppose that $\xi_m \rightarrow \xi$ and 
$\xi_m \in \psi(\E)$.  Then $e_i(x_m)$ is Cauchy for each $i = 1, \ldots ,n$
where $x_m$ is chosen from $\psi^{-1}(\xi_m)$. Assumption \ref{2.14} then implies
that $f(x_m)$ is Cauchy for every $f \in \R$. Denote this new function by
$\overline{\Psi f}$. Put $\R' = \{ \overline{\Psi f} \; : \; f \in \R \}$, 
$\R_+' = \{ \overline{\Psi f} \; : \; f \in \R_+ \}$ and $E' = \overline{\Psi(\E)}$. 
The space $E'$ is called the
{\em intrinsic state space} for $p_t$.  The class $\R_+'$ is a convex 
subcone of $\C^+ (E')$ that
contains the nonnegative constant functions and is closed under pointwise
minimization. Also $\R' = \R_+' - \R_+'$ so it follows that $\R'$ is a linear subspace of
$C(E')$ which contains the constant functions and is closed under pointwise
minimization and maximization. We will require one final property of $\R'$ which we
state as a Lemma. 

\begin{lemma}
\label{2.18} The class $\R'$ separates points in $E'$. 
\end{lemma}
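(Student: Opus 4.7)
The plan is to convert the separation question into a Cauchy-sequence question so that Assumption \ref{2.14} can be applied directly, in the same spirit as Proposition \ref{2.13}. Fix two distinct points $\xi,\eta \in E' = \overline{\psi(E)}$. The goal is to produce some $f \in \R$ with $\overline{\Psi f}(\xi) \neq \overline{\Psi f}(\eta)$.

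First I would choose sequences $x_n, y_n \in E$ with $\psi(x_n) \to \xi$ and $\psi(y_n) \to \eta$; these exist since $\xi,\eta \in \overline{\psi(E)}$. Because $\xi \neq \eta$ in $\reals^d$, at least one coordinate differs, say $\xi_{i_0} \neq \eta_{i_0}$. Now form the interleaved sequence
\[
z_n = \begin{cases} x_k & n = 2k-1 \\ y_k & n = 2k. \end{cases}
\]
Then $e_{i_0}(z_n)$ has $\xi_{i_0}$ and $\eta_{i_0}$ as two distinct accumulation points, hence is \emph{not} Cauchy. In particular, it is not the case that $e_i(z_n)$ is Cauchy for every $i$.

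By Assumption \ref{2.14}, $\E \sim \R$; the definition of this equivalence (Definition \ref{2.12}), read contrapositively, says that if some $e_i(z_n)$ fails to be Cauchy, then some $f \in \R$ also fails to produce a Cauchy sequence $f(z_n)$. Pick such an $f$. Because $\psi(x_n) \to \xi$ (so $e_i(x_n) \to \xi_i$ for each $i$), the discussion preceding the lemma shows that $f(x_n)$ converges to $\overline{\Psi f}(\xi)$; likewise $f(y_n) \to \overline{\Psi f}(\eta)$. But if these two limits were equal, every subsequence of $f(z_n)$ would converge to the same value, forcing $f(z_n)$ to be Cauchy, contradicting our choice of $f$. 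Therefore $\overline{\Psi f}(\xi) \neq \overline{\Psi f}(\eta)$, and $\R'$ separates $\xi$ from $\eta$.

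The only potentially subtle point is arranging the argument so that the hypothesis of Definition \ref{2.12} is used in the correct direction. Since the equivalence in that definition is biconditional, the contrapositive form (failure of Cauchyness for some $e_i$ implies failure for some $f \in \R$) is immediate, and the interleaving trick is exactly what converts "$\xi \neq \eta$" into such a failure. No further obstacle remains.
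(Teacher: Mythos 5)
Your proof is correct and is essentially the paper's own argument run in the contrapositive direction: the paper assumes $\overline{\Psi f}(\xi)=\overline{\Psi f}(\eta)$ for all $f$ and deduces $\xi=\eta$, while you assume $\xi\neq\eta$ and produce a separating $f$, but both rest on the same interleaving trick and the same direction of the equivalence $\E\sim\R$ from Assumption \ref{2.14}. No gap.
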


\begin{proof}
Suppose that $\xi, \eta \in E'$ and $\overline{\Psi f}(\xi) = \overline{\Psi f}(\eta)$ 
for all $f \in \R$.  Let $\xi_n$ be a
sequence of points in $\psi(\E)$ that converges to $\xi$ and let $\eta_n$ be a sequence
converging to $\eta$. Then 
$\lim_n \Psi f (\xi_n) = \lim_n \Psi f (\eta_n)$.
Let $x_n$ be a point in
$\psi^{-1}(\xi_n)$
and let $y_n$ be a point in 
$\psi^{-1}(\eta_n)$.
Then we have $\lim_n f(x_n) = \lim f(y_n)$. Hence, the sequence
$(f(x_1),f(y_1),f(x_2),f(y_2),\ldots)$
is Cauchy for every $f \in \R$.  By assumption \ref{2.14}, 
$(e_i(x_1), e_i(y_1), e_i(x_2), e_i(y_2), \ldots)$ is Cauchy for
$i = 1, \ldots,d$. Hence $\lim_n e_i(x_n) = \lim_n e_i(y_n)$ for all $i$. 
This means that $\lim \xi_n = \lim \eta_n$ and so $\xi = \eta$.
\end{proof}

For each $\alpha > 0$, define a positive linear operator $V^{\alpha}$ on $\R'$ as follows: 
\[
    V^{\alpha} \; : \; 
    \overline{\Psi f} 
    \rightarrow \Psi f 
    \rightarrow f 
    \rightarrow U^{\alpha} f
    \rightarrow \Psi(U^{\alpha} f)
    \rightarrow \overline{\Psi(U^{\alpha} f)}.
\]
The first map is simply the restriction map. The second map is $\Psi^{-1}$, which
exists since $\Psi$ is an injection. The fourth and fifth maps make sense since
$U^{\alpha} \R \subset \R$. It is easy to see that $V^{\alpha}$ inherits the 
following properties from $U^{\alpha}$: 
\begin{enumerate}
\item $\alpha V^{\alpha} 1 = 1$; 
\item $V^{\alpha} - V^{\beta} = (\beta - \alpha)V^{\alpha}V^{\beta}$; 
\item if $f_n \downarrow 0$ pointwise and $f_n \in \R'$, 
	then $V^{\alpha} f_n \downarrow 0$ pointwise. 
\end{enumerate}

\noindent
By Daniel's theorem (see e.g. \cite{DM78} Section III-35), for each $x \in E'$, and
$\alpha > 0$, there exists a unique measure $\nu^{\alpha}(x)$ on $\sigma(\R')$ 
such that 
\[
    V^{\alpha} f(x) = \int \nu^{\alpha}(x,dy) f(y), \quad f \in \R' .
\]
Since $\R'$ separates points in $E'$ and $E'$ is closed, it follows from
Proposition \ref{A.1}
that $\sigma(\R') = \B(E')$. From the lattice version of the monotone class theorem (see
e.g. \cite{DM78}, Theorem 22.3) and the fact that $x \rightarrow \nu^{\alpha}(x,f)$ is
$\B(E')$
measurable (even continuous) for every $f \in \R'$, it follows that 
$x \rightarrow \nu^{\alpha}(x,f)$ is
$\B(E')$ measurable for every bounded $\B(E')$ measurable $f$. 	
Another monotone class argument shows that $\nu^{\alpha}$ is a resolvent. 

In the next section, we start with a state space $E$, a family of real-valued functions
$\R_+$ on $E$, and a resolvent $u^{\alpha}$ which all together satisfy the conditions which we
have found that $E'$, $\R_+'$ and $\nu^{\alpha}$ satisfy. From these we show that such a resolvent
gives rise to a unique 
Markov transition function and that from this transition function we can
construct a strong Markov process in the space of right continuous trajectories
that have left hand limits. 

\section{From resolvent to strong markov process} \label{sec3}

In this section, we take as our starting point properties of the intrinsic state
space that were developed in Section \ref{sec2}. Hence we make the following
assumptions. Let $(E,\B)$ be a closed subset of $\reals^d$ with the Borel
$\sigma$-algebra.
Suppose that there is given a convex subcone $\R_+$ of $C^+(E)$ that contains the
nonnegative constant functions and that separates points in $E$. Finally, suppose
that we have a Markov resolvent $U^{\alpha}$ on $(E ,\B)$ with the following
properties: 
\begin{enumer}
\label{3.1} $U^{\alpha} \R_+ \subset \R_+$;
\end{enumer}
\begin{enumer}
\label{3.2} every $f \in \R_+$ is $\alpha$-supermedian for some $\alpha > 0$. 
\end{enumer}
\noindent
Put $\R = \R_+ - \R_+$.  We need one additional assumption that is not a carry-over from
Section 2 (or maybe it is---this is an open problem): 
\begin{enumer}
\label{3.3} $C_0(E) \subset \cl(\R)$. 
\end{enumer}

The notation $\cl(\R)$ means the closure of $\R$ in the sup norm. If $E$ is a bounded
subset of $\reals^d$ then $C_0(E) = C(E)$ and, by the lattice version of the
Stone-Weierstrass theorem, $C(E) = \cl(\R)$. Hence, in this case assumption
\eqref{3.3} is
automatic. The examples in Section 4 show that, even when $E$ is unbounded,
assumption \eqref{3.3} is easily verified. A resolvent that satisfies
\eqref{3.1}
and \eqref{3.2} will be called an {\em Engelbert resolvent}. 

The assumptions in force in this section are very similar to those of a Ray
resolvent. Indeed for a {\em Markov Ray resolvent} $U^{\alpha}$ one assumes 
that $E$ is a compact
metric space, $\B$ is the Borel $\sigma$-algebra, and 
\begin{enumer}
\label{3.4} $U^{\alpha}C(E) \subset C(E)$; 
\end{enumer}
\begin{enumer}
\label{3.5} the continuous $\alpha$-supermedian functions separate points in $E$. 
\end{enumer}

We have the following connection between Ray and Engelbert resolvents. 

\begin{thm}
\label{3.6} If $E$ is a compact subset of $\reals^d $ and $U^{\alpha}$ is a Markov Ray resolvent,
	then $U^{\alpha}$ is an Engelbert resolvent. 
\end{thm}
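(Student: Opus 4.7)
The strategy is to take for $\R_+$ the cone of \emph{all} continuous supermedian functions on $E$ and verify that each requirement for an Engelbert resolvent follows directly from the Ray hypotheses. Concretely, set
\[
  \R_+ \;:=\; \{\, f \in C^+(E) \,:\, f \text{ is } \alpha\text{-supermedian for some } \alpha > 0\,\}.
\]
The hypothesis \eqref{3.5} says precisely that $\R_+$ separates points, and property \eqref{3.2} is built into the definition. The resolvent identity $\alpha U^\alpha 1 = 1$ gives $(\beta - \alpha) U^\beta c = c(\beta - \alpha)/\beta \le c$ for every constant $c \ge 0$ and every $\alpha, \beta > 0$, so the nonnegative constants lie in $\R_+$. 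Finally, \eqref{3.1} follows by combining the Ray hypothesis \eqref{3.4} (which keeps $U^\alpha f$ continuous) with the elementary consequence
\[
  U^\alpha f \;=\; U^\beta f \;+\; (\beta - \alpha)\,U^\beta(U^\alpha f) \;\ge\; (\beta - \alpha)\,U^\beta(U^\alpha f)
\]
of the resolvent equation \eqref{2.5}, which shows $U^\alpha f$ is automatically $\alpha$-supermedian for every $f \in \B^+$.

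The only genuinely computational step is that $\R_+$ is a convex cone closed under pointwise minimum, and here the bookkeeping in the supermedian index requires care. Suppose $f$ is $\alpha$-supermedian and $g$ is $\eta$-supermedian, and set $\gamma = \max(\alpha,\eta)$. For $\beta \ge \gamma$ one has $(\beta - \gamma) U^\beta f \le (\beta - \alpha) U^\beta f \le f$ (since $U^\beta f \ge 0$ and $\gamma \ge \alpha$), and likewise $(\beta - \gamma) U^\beta g \le g$; for $\beta < \gamma$ the inequality $(\beta - \gamma) U^\beta(\,\cdot\,) \le (\,\cdot\,)$ is vacuous because the left side is nonpositive. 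Summing these bounds shows $f + g$ is $\gamma$-supermedian. For the minimum, positivity of $U^\beta$ gives $U^\beta(f \wedge g) \le U^\beta f$, whence $(\beta - \gamma) U^\beta(f \wedge g) \le (\beta - \gamma) U^\beta f \le f$ and symmetrically $\le g$, so $f \wedge g$ is $\gamma$-supermedian. Scaling by a nonnegative constant preserves the supermedian index, and continuity is preserved by sums and pointwise minima, so $\R_+$ is a convex subcone of $C^+(E)$ stable under $\wedge$.

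It remains to verify \eqref{3.3}. Because $E$ is compact, $C_0(E) = C(E)$, so the task is to show that $\R = \R_+ - \R_+$ is sup-norm dense in $C(E)$. The space $\R$ is a linear subspace of $C(E)$ that contains the constants and separates points (inherited from $\R_+$), and it is closed under $\wedge$ by the identity
\[
  (g_1 - h_1) \wedge (g_2 - h_2) \;=\; (g_1 + h_2) \wedge (g_2 + h_1) \;-\; (h_1 + h_2),
\]
which exhibits the pointwise minimum of two elements of $\R$ as an element of $\R_+ - \R_+$ whenever $g_i, h_i \in \R_+$. The lattice form of the Stone--Weierstrass theorem then yields $\cl(\R) = C(E)$, completing the proof. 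No single step is genuinely difficult; the only place requiring care is the supermedian-index bookkeeping in the middle paragraph, after which the Ray hypotheses \eqref{3.4} and \eqref{3.5} slot directly into the Engelbert requirements.
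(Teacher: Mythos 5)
Your proof is correct and takes essentially the same route as the paper: the same choice of $\R_+$ (all continuous functions that are $\alpha$-supermedian for some $\alpha>0$), with \eqref{3.4} supplying stability under $U^{\alpha}$ and \eqref{3.5} supplying point separation. You additionally spell out the cone/lattice bookkeeping that the paper dismisses as clear, and verify \eqref{3.3}, which is not part of the definition of an Engelbert resolvent and which the paper notes separately is automatic for bounded $E$ via Stone--Weierstrass.
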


\begin{proof}
We must find a family $\R_+$ with the desired properties. We may take
$\R_+$ to
be the family of all continuous functions that are $\alpha$-supermedian for
some $\alpha > 0$. For any $f \in \B^+$, $U^{\alpha}f$ is $\alpha$-supermedian 
and, by \eqref{3.4}, $U^{\alpha}$ maps continuous
functions into continuous functions. Hence $U^{\alpha} \R_+ \subset \R_+$. 
Property \eqref{3.5} says
that $\R_+$ separates points. It is clear that $\R_+$ is a convex subcone of
$C^+(E)$ and so the proof is complete. 
\end{proof}

With the aim of constructing a strong Markov process, our first goal is to
invert the Laplace transform to recover $p_t$ from $U^{\alpha}$.

\begin{thm}
\label{3.7} Let $U^{\alpha}$ be an Engelbert resolvent. Then there exists a unique
Markov transition function $p_t$ such that the following hold: 
\begin{enumerate}
\item  For all $\alpha > 0$, $f \in \B$: 
	$U^{\alpha}f = \int_0^{\infty} e^{-\alpha t} P_t f dt$.
\item For all $f \in \R_+$, there exists $\alpha > 0$ such that  $e^{-\alpha t} P_t f \le f$. 
\item For all $x \in E$, $f \in \R$, the mapping $t \rightarrow P_t f(x)$ 
	is right continuous on $[0,\infty)$. 
\end{enumerate}
\end{thm}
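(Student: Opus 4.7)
My plan is to prove uniqueness by Laplace inversion and existence by Yosida's exponential approximation.

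Uniqueness: If $p_t$ and $\tilde p_t$ both satisfy (i), then for every $f\in\R$ and $x\in E$ the bounded right-continuous functions $t\mapsto P_t f(x)$ and $t\mapsto\tilde P_t f(x)$ share the Laplace transform $\alpha\mapsto U^{\alpha}f(x)$, so they agree for Lebesgue-a.e.\ $t$, hence everywhere by (iii). Since $\cl(\R)\supset C_0(E)$ by \eqref{3.3}, $\R$ generates the Borel $\sigma$-algebra $\B$, so $p_t(x,\cdot)=\tilde p_t(x,\cdot)$ as measures for all $t,x$.

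Existence: For each $\lambda>0$ define the bounded operator on $\B$,
\[
    T_t^{\lambda} := e^{-\lambda t}\sum_{n=0}^{\infty}\frac{(\lambda t)^n}{n!}\,(\lambda U^{\lambda})^n,\qquad t\ge 0.
\]
Since $\lambda U^{\lambda}$ is a stochastic kernel by \eqref{2.4}, each $T_t^{\lambda}$ is again a stochastic kernel, corresponding to a transition function $p_t^{\lambda}(x,\cdot)$. The engine of the argument is the $\alpha$-supermedian estimate: for $f\in\R_+$ that is $\alpha$-supermedian and $\lambda>\alpha$, $(\lambda U^{\lambda})^n f\le\bigl(\lambda/(\lambda-\alpha)\bigr)^n f$, whence
\[
    T_t^{\lambda}f\;\le\;\exp\!\Bigl(\tfrac{\alpha\lambda t}{\lambda-\alpha}\Bigr)\,f,
\]
which passes, in the limit $\lambda\to\infty$, to the bound that will become (ii). Next I would establish $T_t^{\lambda}f\to P_t f$ first on $\C = U^{\beta}\B$: writing $f=U^{\beta}h$, the resolvent equation gives $\lambda U^{\lambda}f = \tfrac{\lambda}{\lambda-\beta}(f-U^{\lambda}h)$, and since $\|U^{\lambda}h\|\le\|h\|/\lambda$, $\lambda U^{\lambda}f\to f$ in sup norm. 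The standard Hille--Yosida identity
\[
    T_t^{\lambda}f-T_t^{\mu}f=\int_0^t T_s^{\lambda}\bigl(A_{\lambda}-A_{\mu}\bigr)T_{t-s}^{\mu}f\,ds,\qquad A_{\nu}:=\nu^2 U^{\nu}-\nu I,
\]
then shows $\{T_t^{\lambda}f\}$ is sup-norm Cauchy, uniformly for $t$ in compacts. Using the supermedian bound above, this convergence extends to all of $\R_+$, and by \eqref{3.3} to $\cl(\R)\supset C_0(E)$. For each $x,t$, $f\mapsto P_t f(x)$ is a positive linear functional on $C_0(E)$, represented by Riesz as a Borel probability $p_t(x,\cdot)$ with $P_t f(x)=\int f\,dp_t(x,\cdot)$; the normalization $P_t 1=1$ is the limit of $T_t^{\lambda}1=1$. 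We then define $P_t f(x):=\int f\,dp_t(x,\cdot)$ for every $f\in\B$, and a monotone class argument transports joint measurability and the semigroup identity from $C_0(E)$ to $\B$.

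Verification: (i) follows from $\int_0^{\infty}e^{-\alpha t}T_t^{\lambda}f\,dt = \bigl((\alpha+\lambda)I-\lambda^2 U^{\lambda}\bigr)^{-1}f$, which tends to $U^{\alpha}f$ as $\lambda\to\infty$ by the resolvent identity \eqref{2.5}; (ii) is the $\lambda\to\infty$ limit of the supermedian bound above; (iii) is immediate because each $T_t^{\lambda}f$ is continuous in $t$ and the convergence is locally uniform. The main obstacle is precisely the passage $\lambda\to\infty$: the standard Hille--Yosida argument asks for $\lambda U^{\lambda}f\to f$ uniformly, which we have only on $\C$ and must propagate to $\R_+$ through the one-sided supermedian bound, whose constant $\alpha$ depends on $f$ and so must be tracked carefully throughout the Cauchy estimate.
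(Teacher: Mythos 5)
Your uniqueness argument is fine and is essentially what the paper does implicitly. The existence argument, however, has a gap that is not merely the bookkeeping issue you flag at the end: the Yosida approximation scheme is structurally the wrong tool here, because the semigroup being constructed is a Ray-type semigroup that need not be strongly continuous and need not satisfy $P_0=I$. Concretely, for $f\in\R_+$ that is $\alpha$-supermedian, Proposition \ref{A.3} gives $\widehat f=\lim_\beta \beta U^\beta f\le f$ with $U^\lambda(f-\widehat f)=0$; hence in your notation $T_t^{\lambda}f=T_t^{\lambda}\widehat f+e^{-\lambda t}(f-\widehat f)$, since only the $n=0$ term of the exponential series survives when applied to $f-\widehat f$. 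When $\widehat f\ne f$ (branch points, which assumptions \eqref{3.1}--\eqref{3.3} expressly permit --- see how $\widehat g\le g$ is handled in the proof of Theorem \ref{3.10}), the approximants therefore have a boundary layer at $t=0$: they converge to $f$ at $t=0$ but to a function whose limit as $t\downarrow 0$ is $\widehat f$. So the locally uniform convergence on $[0,\infty)$ that you invoke for (iii) is false, and right continuity at $t=0$ does not come out of your construction. Relatedly, your Cauchy estimate via $\|T_t^{\lambda}f-T_t^{\mu}f\|\le t\|A_{\lambda}f-A_{\mu}f\|$ needs $A_{\lambda}f$ to be sup-norm Cauchy; for $f=U^{\beta}h$ one computes $A_{\lambda}f=\tfrac{\lambda\beta}{\lambda-\beta}U^{\beta}h-\tfrac{\lambda}{\lambda-\beta}\,\lambda U^{\lambda}h$, and $\lambda U^{\lambda}h$ is not Cauchy for general $h\in\B$, so even the first step on $\C=U^{\beta}\B$ only works for $h$ in the closure of $\C$. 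The subsequent ``propagation to $\R_+$ through the one-sided supermedian bound'' cannot work as described: a one-sided inequality $T_t^{\lambda}f\le e^{\alpha\lambda t/(\lambda-\alpha)}f$ gives no control on $|T_t^{\lambda}f-T_t^{\mu}f|$, and this is exactly where the branch-point phenomenon lives.

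The paper's proof sidesteps all of this by working pointwise in $x$ and one supermedian $f$ at a time: it shows $\beta\mapsto\widehat f(x)-(\beta-\alpha)U^{\beta}f(x)$ is completely monotone, invokes the Hausdorff--Bernstein--Widder theorem to produce a measure $\lambda_{x,\alpha}(f,\cdot)$ on $[0,\infty)$ not charging $\{0\}$, and defines $P_tf(x)$ by the explicit formula \eqref{3.9}. Right continuity in $t$, the bound (ii), and the correct behaviour at branch points ($P_0f=\widehat f$, not $f$) then come for free, and no strong continuity of the resolvent is ever needed. If you want to salvage an approximation-based proof you would have to prove convergence of $T_t^{\lambda}f$ for each fixed $t>0$ only, identify the limit via its Laplace transform, and supply a separate argument for right continuity --- at which point you have essentially reconstructed the complete-monotonicity argument.
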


\begin{proof}
We will need several general properties of resolvents and $\alpha$-supermedian
functions. For convenience, we have summarized these properties in the appendix.
Fix $f \in \R_+$. Then there is an $\alpha > 0$ such that $f$ is $\alpha$-supermedian. 
According to \ref{A.3} the 
$\lim_{\beta \rightarrow \infty} (\beta - \alpha) U^{\beta} f(x)$
exists. We denote the limit by $\hat{f}(x)$. Fix $x \in E$ 
and put 
$g(\beta) = \hat{f}(x)-(\beta-\alpha)U^{\beta}f(x)$. 
By \ref{A.3}, $g$ is nonnegative and by \eqref{A.2}
\[
    (-1)^n \left( \frac{d}{d \beta} \right)^n g(\beta)
    = n! (U^{\beta})^n [ 1 - (\beta-\alpha) U^{\beta} ] f .
\]
Since $f$ is $\alpha$-supermedian and $U^{\beta}$ is a positive operator, we see
that $g$ is completely monotone. 
Also $g(0) = g(0+) = \hat{f}(x) + \alpha U^0f(x)$, which might be infinite
($U^0 f(x) = \lim_{\beta \downarrow 0} U^{\beta}f(x)$ 
 exists for every $f \in \B^+$). By the Hausdorf-Bernstein-Widder
Theorem 
(see e.g. \cite{Fel71}, p.439) there exists a positive measure
$\lambda_{x,\alpha}(f,\cdot)$ on $[0,\infty)$ of
total mass $\hat{f}(x) + \alpha U^0 f(x)$ such that, for $\beta \ge 0$, 
\begin{equation} \label{3.8}
    \hat{f}(x) - (\beta - \alpha) U^{\beta} f(x) 
	= \int_{[0,\infty)} e^{-\beta t} \lambda_{x,\alpha} (f,dt) .
\end{equation}
Note that $\lambda_{x,\alpha} (f,\cdot)$ does not charge $\{0\}$: 
\begin{eqnarray*}
    \lambda_{x,\alpha}(f,\{ 0 \} ) 
    & = & \lim_{\beta \rightarrow \infty} 
    	  \int_{[0,\infty)} e^{-\beta t} \lambda_{x,\alpha} (f,dt) \\
    & = & \lim_{\beta \rightarrow \infty} 
    	  \left[ \hat{f}(x) - (\beta - \alpha) U^{\beta} f(x) \right] \\
    & = & 0.
\end{eqnarray*}
Put, for $f \in \R_+$,
\begin{equation} \label{3.9}
    P_t^{(\alpha)} f(x) 
	= e^{\alpha t} \int_{(t,\infty)} e^{-\alpha s} \lambda_{x,\alpha} (f,ds).
\end{equation}
Then for each $x \in E$, the mapping
$t \rightarrow e^{-\alpha t} P_t^{(\alpha)} f(x)$
is decreasing, right continuous and  
\[
    e^{-\alpha t} P_t^{(\alpha)} f (x)
    \; \uparrow \; 
    \int_{(0,\infty)} e^{-\alpha t} \lambda_{x,\alpha}(f,ds)
    \; = \;
    \hat{f}(x) 
\]
as $t \downarrow 0$, since 
$\lambda_{x,\alpha}(f,\cdot)$
does not charge $\{0\}$.  Moreover, for $\beta > 0$,  
\begin{eqnarray*}
    \int_0^{\infty} e^{-\beta t} P_t^{(\alpha)} f(x) dt
    & = & \int_0^{\infty} e^{-\beta t} e^{\alpha t} 
    	  \int_t^{\infty} e^{-\alpha s} \lambda_{x,\alpha}(f,ds)dt \\
    & = & \int_0^{\infty} \int_0^s e^{-(\beta-\alpha)t} dt \; 
			e^{-\alpha s} \lambda_{x,\alpha} (f,ds) \\
    & = & \frac{1}{\beta-\alpha} \int_0^{\infty} 
    	  \left[ 1 - e^{-(\beta-\alpha)s} \right] 
	  e^{-\alpha s} \lambda_{x,\alpha} (f,ds) \\
    & = & \frac{1}{\beta-\alpha} 
    	  \left[ \hat{f}(x) - \hat{f}(x) + (\beta-\alpha) U^{\beta} f(x) \right] \\
    & = & U^{\beta} f(x). 
\end{eqnarray*}
Hence $P_t^{(\alpha)}$ does not depend on $\alpha$ 
(by the uniqueness of the Laplace transform) and so we will now denote it simply
by $P_t$. 

Consider $f_1, f_2, \ldots, f_n \in \R_+$ and nonnegative real numbers 
$r_1,r_2, \ldots ,r_n$. Put $f = r_1 f_1 + \cdots + r_n f_n$. 
Choose $\alpha$ sufficiently large so that $f_1, f_2, \ldots, f_n$
are all $\alpha$-supermedian. Then $f$ is also $\alpha$-supermedian. Since 
\[
    \hat{f}(x) - (\beta-\alpha) U^{\beta} f(x) 
	= \sum_{i=1}^n r_i \left[ \hat{f}_i - (\beta-\alpha)U^{\beta}f_i(x)\right], 
	\qquad \beta > 0,
\]
it follows from the uniqueness of Laplace transforms that 
$\lambda_{x,\alpha}(f, \cdot) = \sum_{i=1}^n r_i \lambda_{x,\alpha}(f_i,\cdot)$.
It then 
follows from \eqref{3.9} that $P_t f(x) = \sum_{i=1}^n r_i P_t f_i(x)$. 
Hence $P_t$ is a cone map on $\R_+$.
Extend by 
linearity the definition of $P_t$ to $\R = \R_+ -\R_+$. Denote the extension
again by $P_t$.
It is easy to see that this extension is well defined and that 
$f \rightarrow P_t f(x)$ is a linear functional on $\R$. 
Moreover, by linearity, $t \rightarrow P_t f(x)$ is right continuous and 
\[
U^{\beta} f(x) = \int_0^{\infty} e^{-\beta t} P_t f(x) dt,
	\qquad f \in \R. 
\]
Using the first property in \eqref{A.2}, we see that 
\begin{eqnarray*}
    \left( \frac{d}{d \beta} \right)^n 
    \left[ \frac{f}{\beta} - U^{\beta+\alpha} f \right]
    & = & \frac{(-1)^n n!}{\beta^{n+1}} f - n!(-1)^n (U^{\beta+\alpha})^{n+1} f \\
    & = & \frac{(-1)^n n!}{\beta^{n+1}} n! 
    		\left[ f - (\beta U^{\beta+\alpha})^n f \right].
\end{eqnarray*}
For $f \in \R_+$, choose $\alpha$ so that $f$ is $\alpha$-supermedian. Then we can iterate the
inequality $\beta U^{\beta+\alpha} f \le f$
to conclude that $\frac{f}{\beta} - U^{\beta+\alpha} f$ is completely monotone. Since
it is clearly the Laplace transform of 
$f - e^{-\alpha t} P_t f$,
we see that $e^{-\alpha t} P_t f \le f$ for all $t \ge 0$.

Since $\R$ contains the constant functions and $U^{\alpha} 1 = 1/\alpha$, 
we see that $P_t 1 = 1$ for all $t \ge 0$.

If $f \in \R$ and $f$ is nonnegative, then 
$\left( \frac{d}{d \beta} \right)^n U^{\beta} f = (-1)^n n! (U^{\beta})^{n+1} f$
and since $(U^{\beta})^{n+1} f \ge 0$
we see that $\beta \rightarrow U^{\beta} f$ is completely monotone. Since
$U^\beta f$
is the Laplace transform of $P_t f$, we see that $P_t f \ge 0$ for all $t \ge 0$. 
Hence $P_t$ is a positive linear operator on $\R$ and, since $P_t 1 = 1$, 
it is bounded and therefore continuous. We can now extend the 
definition of $P_t$ by continuity to $\cl(\R)$. Fix $t$ and $x$ and consider the linear
functional $f \rightarrow P_t f(x)$ defined on $C_0(E)$. Let $f_n$ be a sequence of nonnegative
functions in $C_0(E)$ that decreases to zero pointwise. Then by Dini's lemma
$f_n$ converges uniformly to zero. Hence $P_t f_n(x)$ converges to zero and so we can
apply Daniel's theorem to conclude that there exists a unique measure
$p_t(x,\cdot)$ on $\B$ such that $P_t(x) = \int p_t(x,dy)f(y)$ for all 
$f \in C_0(E)$. 

By Proposition A.4, the map $(t,x) \rightarrow P_t f(x)$ is 
$\B([0,\infty)) \times \B$ measurable for each $f \in \R$. 
Applying the lattice version of the monotone class theorem,
it follows that $(t,x) \rightarrow P_t f(x)$ is $\B([0,\infty)) \times \B$ measurable for
every $f \in \B$ and also that 
\[
    U^{\alpha} f(x) = \int_0^{\infty} e^{-\alpha t} P_t f(x) dt
\]
for every $f \in \B$. 

Finally, we must show that $P_t$ forms a semigroup. Again by a monotone class
argument, it suffices to show that 
$P_t P_s f = P_{t+s} f$
for all $f \in \R_+$, and $s,t > 0$. Fix
$f \in \R_+$. Then $P_t P_s f$ and $P_{t+s} f$ are both right continuous in 
$s$ and so it suffices
to show that, for each $t \ge 0$, they have the same Laplace transform on $s$. That
is, 
\[
    P_t U^{\alpha} f(x) = \int_0^{\infty} e^{-\alpha s} P_{t+s} f(x) ds.
\]
Since $U^{\alpha} f \in \R$, the left-hand side is right continuous in $t$. So is the right-hand
side. Hence, it suffices to check that they have the same Laplace transform on
$t$. Thus 
\begin{eqnarray*}
    U^{\beta} U^{\alpha} f(t) 
    & = & \int_0^{\infty} 
          e^{-\beta t} \int_0^{\infty} e^{-\alpha t} P_{t+s} f(x) ds dt \\
    & = & \int_0^{\infty} \int_0^{\infty}
          e^{-(\beta -\alpha)t} e^{-\alpha s} P_s f(x) ds dt \\
    & = & \int_0^{\infty} e^{-\alpha s} 
          \left [ \int_0^{\infty} e^{-(\beta -\alpha)t} dt \right] P_s f(x) ds .
\end{eqnarray*}
Since 
\[
    \int_0^s e^{-(\beta - \alpha)t} dt 
    = \left\{ 
         \begin{array}{ll}
	     \frac{1}{\beta-\alpha} \left[ 1 - e^{-(\beta-\alpha)s} \right] 
	     & \qquad \beta \ne \alpha \\
	     s & \qquad \beta = \alpha 
	 \end{array}
      \right.
\]
we must check that 
\[
    U^{\beta} U^{\alpha} f
    = \left\{ 
         \begin{array}{ll}
	     \frac{1}{\beta-\alpha} \left[ U^{\alpha} f - U^{\beta} f \right] 
	     & \qquad \beta \ne \alpha \\
	     (U^{\alpha})^2 f & \qquad \beta = \alpha 
	 \end{array}
      \right.
\]
which is true since $U^{\alpha}$ is a resolvent. This completes the proof. 
\end{proof}

The transition function constructed in Theorem \ref{3.7} will be called an {\em Engelbert
transition function}. Let $\W$ denote the space of all functions from 
$\reals^+ = [0,\infty)$
into $E$ that are right continuous and have left-hand limits. Put 
$Y_t(w) = w(t)$ for $w \in \W$ and let 
$\G^0 = \sigma\{Y_t | t \ge 0\}$ and $\G_t^0 = \sigma\{Y_t | 0 \le s \le t \}$. 

\begin{thm} \label{3.10} 
Let $p_t$ be an Engelbert transition function. Then. for each
probability measure $\mu$ on $(E ,\B)$, there exists a probability measure
$\Prob_{\mu}$ on $(\W,\G^0)$ such that 
\begin{enumerate}
\item $\Exp_{\mu} \{ f(Y_{t+s}) | \G_s^0 \} = P_t f(Y_s)$, a.s. $\Prob_{\mu}$
for $f \in \B$ and all $s, t \ge 0$;
\item $\Exp_{\mu} f(Y_t) = \int \mu(dx) P_t f(x)$, for $f \in \B$ and all $t \ge
0$
\end{enumerate}
($\Exp_{\mu}Z$ denotes the expectation of $Z(w)$ with respect to the probability
 measure $\Prob_{\mu}$). 
\end{thm}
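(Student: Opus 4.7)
The plan is the classical Kolmogorov-extension plus supermartingale-regularization pipeline. First I would construct a raw measure $\widetilde{\Prob}_\mu$ on the product space $(E^{\reals^+},\otimes_t\B)$ using Kolmogorov's theorem, which is legitimate because $E\subset\reals^d$ is closed and hence Polish. The finite-dimensional family
\[
    \mu_{t_1,\ldots,t_n}(B_1\times\cdots\times B_n)
      = \int \mu(dx_0)\int_{B_1}p_{t_1}(x_0,dy_1)\cdots\int_{B_n}p_{t_n-t_{n-1}}(y_{n-1},dy_n)
\]
is consistent by the semigroup property of $p_t$. Under $\widetilde{\Prob}_\mu$ the coordinate process $(Y_t)$ satisfies raw-filtration analogues of (i) and (ii), first for $f\in\R$ and then for $f\in\B$ by a monotone class argument.

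The regularization step is driven by Theorem \ref{3.7}(ii): for each $f\in\R_+$ there is an $\alpha>0$ with $e^{-\alpha t}P_tf\le f$. The raw Markov property then shows that $Z_t^f := e^{-\alpha t}f(Y_t)$ is a nonnegative $\widetilde{\Prob}_\mu$-supermartingale, since
\[
    \Exp_\mu\bigl[e^{-\alpha(s+t)}f(Y_{s+t})\mid\G_s^0\bigr]
       = e^{-\alpha(s+t)}P_tf(Y_s)\le e^{-\alpha s}f(Y_s).
\]
Doob's supermartingale regularization theorem then yields a $\widetilde{\Prob}_\mu$-null set off which the rational one-sided limits $\lim_{s\to t\pm,\,s\in\rationals}Z_s^f$ exist at every $t\ge 0$. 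Using assumption \eqref{3.3}, namely $C_0(E)\subset\cl(\R)$, together with separability of $C_0(E)$, I can pick a countable family $\F\subset\R_+$ whose associated pseudometrics detect convergence in the Euclidean topology of $E$. Outside the countable union of exceptional sets the scalar paths $f(Y_\cdot)$ for $f\in\F$ simultaneously admit rational one-sided limits, which forces $Y_\cdot$ itself to have right and left limits in $E$; setting $Y'_t = \lim_{s\downarrow t,\,s\in\rationals}Y_s$ on the good set and a fixed constant off it produces the required cadlag modification.

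Let $\Prob_\mu$ be the push-forward of $\widetilde{\Prob}_\mu$ under $w\mapsto Y'_\cdot(w)$; this is a probability on $(\W,\G^0)$. For the Markov identity (i), fix $f\in\R$ and a rational $\epsilon>0$: the raw identity $\Exp_\mu[f(Y_{s+t+\epsilon})\mid\G_{s+\epsilon}^0] = P_tf(Y_{s+\epsilon})$ passes to the limit $\epsilon\downarrow 0$ using the right continuity of the cadlag modification together with Theorem \ref{3.7}(iii) (right continuity of $t\mapsto P_tf(x)$ for $f\in\R$), and a further monotone class step extends the identity from $f\in\R$ to all $f\in\B$. Specializing to $s=0$ gives (ii).

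The main obstacle is the passage from scalar cadlag regularity of $f(Y_\cdot)$ for $f$ in a countable subset of $\R_+$ to joint cadlag regularity of the $\reals^d$-valued path $Y_\cdot$ inside the closed set $E$. Beyond the countability reduction, one must also rule out escape of the path to infinity when $E$ is unbounded; conservativity $P_t1=1$ keeps each individual time-slice inside $E$, and assumption \eqref{3.3} is exactly the density hypothesis needed to promote scalar convergence along a countable family to topological convergence in $E$.
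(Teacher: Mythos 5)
Your construction and regularization steps (Kolmogorov extension on $E^{[0,\infty)}$, the supermartingale $e^{-\alpha t}f(X_t)$ for $f\in\R_+$ via Theorem \ref{3.7}(ii), the countable separating family in $C_0(E)\subset\cl(\R)$, and the definition of the right-limit process) match the paper's proof. But there is a genuine gap at the end: you never prove that the regularized process is a \emph{modification} of the raw one, i.e.\ that $\Prob(Y'_t=Y_t)=1$ for each fixed $t$, and your substitute --- passing to the limit $\epsilon\downarrow 0$ in $\Exp_\mu\{f(Y_{s+t+\epsilon})\mid\G^0_{s+\epsilon}\}=P_tf(Y_{s+\epsilon})$ --- does not close. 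The right-hand side involves $P_tf$, which for $f\in\R$ is only Borel measurable (only the resolvent $U^\alpha$ is assumed to preserve continuity), so $Y_{s+\epsilon}\to Y'_s$ does not give $P_tf(Y_{s+\epsilon})\to P_tf(Y'_s)$; Theorem \ref{3.7}(iii) gives right continuity in $t$ for fixed $x$, not continuity in $x$. Moreover, even if the limit were identified, you would obtain the identity with $Y_s$ (not $Y'_s$) inside $P_tf$ and with the conditioning $\sigma$-algebra $\G^0_{s+}$ rather than $\sigma(Y'_u:u\le s)$; repairing either defect again requires $Y'_s=Y_s$ a.s.

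The paper supplies exactly this missing step in \eqref{3.11}--\eqref{3.12}: for $f,g\in\R_+$ one computes $\Exp f(X_t)g(X_{t+})=\Exp f(X_t)\hat g(X_t)$ with $\hat g=P_0g=\lim_{u\downarrow 0}P_ug$; then $0\le\hat g\le g$ and $P_tg=P_tP_0g=P_t\hat g$ force $\Exp g(X_t)=\Exp\hat g(X_t)$, hence $g(X_t)=\hat g(X_t)$ a.s., hence $\Exp f(X_t)g(X_{t+})=\Exp f(X_t)g(X_t)$; a monotone class argument and $F(x,y)=1_{x=y}$ then give $\Prob(X_t=X_{t+})=1$. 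This is the point where the possibility of branch points (which is what makes the sticky-origin and fork-in-the-road examples nontrivial) is actually dealt with, and it cannot be bypassed by continuity alone. Once $Y=X_+$ is known to be a modification, (i) and (ii) transfer immediately. Your closing remark about escape to infinity is a fair concern for unbounded $E$, but the decisive omission is the no-fixed-branch-point computation.
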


\begin{proof}
The proof is quite standard and we will present essentially the same
outline as in \cite{Get75}, Theorem 5.1. Let $\Omega = E^{[0,\infty)}$ and 
$\F = \B^{[0,\infty)}$. Put $X_t(\omega) = \omega(t)$ and 
$\F_t = \sigma\{X_s | 0 \le s \le t \}$. Given $\mu$, we can apply the Kolmogorov extension
theorem to obtain a measure $\Prob$ on $(\Omega,\F)$ such that
$(X_t,\F_t,\Prob)$ satisfies properties (i) and (ii). 

We must show that this process has a modification that is right continuous and
has left-hand limits. By Theorem \ref{3.7}-(ii), for each $f \in \R_+$, 
there exists an $\alpha > 0$ such that 
$e^{-\alpha t} P_t f \le f$.
This fact together with (i) implies that 
$e^{-\alpha t} f(X_t)$ is an $\F_t$-supermartingale. 
Hence, with $\Prob$-probability one, the map $t \rightarrow f(X_t)$ restricted
to the positive rationals $\rationals^+$ has left- and right-hand limits at each point in
$\reals^+$. Since $\cl(\R) \supset C_0(E)$, it follows that for each $f \in
C_0(E)$, with $\Prob$-probability
one, the map $t \rightarrow f(x_t)$ restricted to $\rationals^+$ has left and right-hand limits at each
point in $\reals^+$. The space $C_0(E)$ contains a countable subset 
$(f_n)_{n \ge 0}$ that separates the points in $E$. 
Let $N_n$ be a $\Prob$-null set off of which
$f_n(X_t)$ restricted to $\rationals^+$ has left- and right-hand limits at each
point of $\reals^+$. Put $N = \ds \cup_n N_n$. Since $(f_n)_{n \ge 0}$ separates points, 
it follows that, on $\Omega \setminus N$, the map $t \rightarrow X_t$, 
restricted to $\rationals^+$ has right- and left-hand
limits in $E$ at each point in $\reals^+$. Take $\Omega \setminus N$ as our new
sample space $\Omega$. For each $t \in \reals^+$ and $\omega \in \Omega$, put 
\[
    X_{t+}(\omega) 
	= \lim_{\stackrel{r \downarrow t}{r \in \rationals^+}} X_r(\omega) .
\]
Fix $f,g \in \R_+$, $t \in \reals^+$ and let $r_n$ be a sequence in
$\rationals^+$ such that $r_n \downarrow t$. Then, applying (ii) we get 
\begin{equation} \label{3.11}
  \begin{array}{rcl}
    \Exp f(X_t) g(X_{t+})
    & = & \ds \lim_{n \rightarrow \infty} \Exp f(X_t) g(X_{r_n}) \\[0.2in]
    & = & \ds \lim_{n \rightarrow \infty} \Exp f(X_t) P_{r_n-t}g(X_t) \\[0.2in]
    & = & \Exp f(X_t) \hat{g}(X_t) \\[0.2in]
    &  & \phantom{\Exp f(X_t) \hat{g}(X_t) } \\[0.2in]
    &  & \phantom{\Exp f(X_t) \hat{g}(X_t) } 
  \end{array}
\end{equation}

\vspace*{-0.6in}\noindent
where $\hat{g}(x) = \lim_{t \downarrow 0} P_t g(x) = P_0 g(x)$. 
Since $e^{-\alpha t}P_tg \le g$ (for some $\alpha > 0$) and $g \ge 0$, it
follows that $0 \le \hat{g} \le g$. Since $P_t g = P_t P_0 g = P_t \hat{g}$, we see that 
\[
    \Exp g(X_t) 
    	    = \int \mu(dx) P_t g(x) 
	    = \int \mu(dx) P_t \hat{g}(x) = \Exp \hat{g}(X_t).
\]
Hence $g(X_t) = \hat{g}(X_t)$ $\Prob$-a.s. and so \eqref{3.11} implies that 
\begin{equation} \label{3.12}
    \Exp f(X_t)g(X_{t+}) = \Exp f(X_t)g(X_t). 
\end{equation}
Since $\cl(\R) \supset C_0(E)$, \eqref{3.12} actually holds for all $f,g \in
C_0(E)$. A monotone class argument now shows that 
$\Exp F(X_t,X_{t+}) = \Exp F(X_t,X_t)$ for all $F \in \B \times \B$. Taking
$F(x,y) = 1_{x=y}$, we see that 
\[
   \Prob(X_t = X_{t+}) = 1. 
\]

Now define $Y_t(\omega) = X_{t+}(\omega)$ for each $t \in \reals^+$, 
$\omega \in \Omega$. Then the map $t \rightarrow Y_t(\omega)$
is right continuous on $\reals^+$ and has left limits on $(0,\infty)$. 
Put $\HH_t = \sigma(Y_t | s \le t)$. 
Since $Y_t = X_t$ $\Prob$-a.s., it follows that $(Y_t,\HH_t, \Prob)$ satisfies (i) and (ii).
Finally, we define a map $\pi$ from $\Omega$ into $\W$ by the formula
$\pi(\omega)(t) = Y_t(\omega)$.
Then the measure $\Prob_{\mu} = \Prob \circ \pi^{-1}$ has all the 
properties stated in the theorem. 
\end{proof}

Formula \eqref{3.10}-(i) is the basic form of the Markov property. We now extend this
property in the usual way. Indeed, for each $t \ge 0$ let $\theta_t$ denote the
{\em shift operator} in $\W$: 
\[
    (\theta_t w)(s) = w(t+s). 
\]
For each probability $\mu$ on $(E,\B)$ let $\G^{\mu}$ denote the completion of
$\G^0$ with respect to $\Prob_{\mu}$ and 
let $N^{\mu} = \{ B \in \G^{\mu} \; | \; \Prob_{\mu}(B) = 0 \}$.
Put $\G_t^{\mu} = \sigma \{ \G_t^{0} \ds \cup N^{\mu} \}$,
$\G_t = \cap_{\mu} \G_t^{\mu}$ and 
$\G = \ds \cup_{\mu} \G^{\mu}$.
For any bounded $\G$-measurable function $Z$, put 
\[
    \theta_t Z(w) = Z(\theta_t w). 
\]
Using a standard monotone class argument, formula \eqref{3.10}-(i) can be extended as
follows: 
\begin{equation} \label{3.13}
    \Exp_{\mu} \{ \theta_t Z | \G_t \} = \Exp_{Y_t} Z, \qquad Z \in \G, \; t \ge 0.
\end{equation}
Formula \eqref{3.13} is called the {\em Markov property}. 

A map $\tau$ from $\W$ to $[0,\infty]$ is called
a {\em stopping time} if 
$\{ w \; | \; \tau(w) \le t \} \in \G_t$ for all $t \ge 0$. 
If $\tau$ is a stopping time, we define the {\em pre-$\tau$ $\sigma$-algebra}
$\G_{\tau}$ to be the collection of all sets $A \in \G$
such that $A \cap \{ \tau \le t \} \in \G_t$ for all $t \ge 0$. 
The following theorem extends \eqref{3.13} to stopping times. 

\begin{thm} \label{3.14} 
Let $\Prob_{\mu}$ be the extension to $(\W,\G)$ of the measure constructed 
in Theorem \ref{3.10}.  Then, for each stopping time $\tau$ and for each 
$Z \in \G$, 
\begin{equation} \label{3.15}
    \Exp_{\mu} \{ \theta_{\tau} Z | \G_{\tau} \}
    = \Exp_{Y_{\tau}} Z \qquad \text{a.s. $\Prob_{\mu}$ on $\{ \tau < \infty \}$}.
\end{equation}
Also $\G_t = \cap_{s > t} \G_s$.
\end{thm}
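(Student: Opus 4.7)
The plan is to follow the classical two-step program: first establish (3.15) for stopping times taking only countably many values, then approximate a general $\tau$ from above by a decreasing sequence $\tau_n$ of discrete stopping times and pass to the limit using right continuity of the sample paths together with the continuity of resolvent functions furnished by the Engelbert hypothesis.

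For the discrete case, when $\tau$ takes values in a countable set $S \subset [0,\infty]$, split according to the value of $\tau$:
\[
    \mathbb{E}_\mu[1_{A \cap \{\tau < \infty\}}\theta_\tau Z]
    = \sum_{t \in S \cap [0,\infty)} \mathbb{E}_\mu[1_{A \cap \{\tau = t\}}\theta_t Z].
\]
Since $A \in \mathcal{G}_\tau$ implies $A \cap \{\tau = t\} \in \mathcal{G}_t$, the ordinary Markov property \eqref{3.13} transforms each summand into $\mathbb{E}_\mu[1_{A \cap \{\tau = t\}}\mathbb{E}_{Y_t}Z]$, and summing yields \eqref{3.15}. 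For a general $\tau$, set $\tau_n = 2^{-n}(\lfloor 2^n \tau \rfloor + 1)$ on $\{\tau < \infty\}$ and $\tau_n = \infty$ otherwise. Routine checks show each $\tau_n$ is a stopping time, $\tau_n \downarrow \tau$, $\{\tau_n < \infty\} = \{\tau < \infty\}$, and $\mathcal{G}_\tau \subset \mathcal{G}_{\tau_n}$, so \eqref{3.15} holds for $\tau_n$.

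To pass from $\tau_n$ to $\tau$ it suffices, by a monotone class argument combined with inversion of Laplace transforms and iteration on the number of time coordinates, to verify the identity for $Z = \int_0^\infty e^{-\alpha s}f(Y_s)\,ds$ with $f \in \mathcal{R}_+$ and $\alpha > 0$. For such $Z$, Fubini gives $\mathbb{E}_x Z = U^\alpha f(x)$, which by assumption \eqref{3.1} lies in $\mathcal{R}_+ \subset C^+(E)$. On $\{\tau < \infty\}$ right continuity of paths gives $Y_{\tau_n + s} \to Y_{\tau + s}$ for every $s \geq 0$, hence $\theta_{\tau_n}Z \to \theta_\tau Z$ by bounded convergence; simultaneously, continuity of $U^\alpha f$ combined with $Y_{\tau_n} \to Y_\tau$ gives $\mathbb{E}_{Y_{\tau_n}}Z \to \mathbb{E}_{Y_\tau}Z$. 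Dominated convergence in \eqref{3.15} applied to $\tau_n$ (with any fixed $A \in \mathcal{G}_\tau \subset \mathcal{G}_{\tau_n}$) then yields \eqref{3.15} for $\tau$. This is the main obstacle, and the critical point is the restriction to Laplace-transformed $Z$'s: under the Engelbert assumptions $P_t$ need not preserve $C(E)$ (only $U^\alpha$ preserves $\mathcal{R}$), so a direct attack with $Z = f(Y_{t_0})$ would founder on continuity of $x \mapsto \mathbb{E}_x Z$; routing expectations through the resolvent lands them back inside $\mathcal{R}_+ \subset C^+(E)$, exactly where continuity is available. The subsequent extension to $Z = f(Y_{t_0})$ (by uniqueness of the Laplace transform and the right continuity of $s \mapsto P_s f$ from Theorem \ref{3.7}(iii)), to products $\prod_i f_i(Y_{t_i})$ (by iterating with the tower property), and finally to all $Z \in \mathcal{G}$ (by monotone class) is then routine.

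For $\mathcal{G}_t = \cap_{s > t}\mathcal{G}_s$, fix $\mu$ and take $Z = \int_0^\infty e^{-\beta s}g(Y_{t+s})\,ds$ with $g \in \mathcal{R}_+$. Splitting the integral at $s = u > 0$ and applying the Markov property (just established) at time $t+u$ gives
\[
    \mathbb{E}_\mu[Z \mid \mathcal{G}_{t+u}^\mu]
    = \int_0^u e^{-\beta s} g(Y_{t+s})\,ds + e^{-\beta u}\, U^\beta g(Y_{t+u}),
\]
which tends a.s. to $U^\beta g(Y_t) = \mathbb{E}_\mu[Z \mid \mathcal{G}_t^\mu]$ as $u \downarrow 0$, since $U^\beta g$ is continuous and $Y$ is right continuous. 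Martingale convergence along $u = 1/n \downarrow 0$ identifies the a.s. limit with $\mathbb{E}_\mu[Z \mid \mathcal{G}_{t+}^\mu]$, so $\mathbb{E}_\mu[Z \mid \mathcal{G}_{t+}^\mu] = \mathbb{E}_\mu[Z \mid \mathcal{G}_t^\mu]$ for such $Z$, and a monotone class argument extends this identity to all bounded $\mathcal{G}$-measurable $Z$. Applying it to $Z = 1_A$ for $A \in \mathcal{G}_{t+}^\mu$ shows $1_A$ is $\mathcal{G}_t^\mu$-measurable modulo $\mathbb{P}_\mu$-null sets, hence $A \in \mathcal{G}_t^\mu$ by the completeness built into $\mathcal{G}_t^\mu$; intersecting over $\mu$ yields $\mathcal{G}_{t+} \subset \mathcal{G}_t$ and hence equality.
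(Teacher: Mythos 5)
Your proof is correct and coincides in substance with the paper's, which simply defers to Blumenthal--Getoor, Sections 1.6--1.8 (Theorem 1.8.11, with their class $\mathbb{L}$ replaced by $\R$): the argument being cited there is precisely the one you write out --- dyadic discretization of $\tau$, passage to the limit via right continuity of paths together with the continuity of $U^{\alpha}f$ for $f \in \R_+$ guaranteed by \eqref{3.1}, recovery of general $Z$ by Laplace inversion, the tower property and a monotone class argument (legitimate because $\sigma(\R) = \B$), and the backward-martingale argument for $\G_t = \cap_{s>t}\G_s$. There is no gap worth flagging; your routing of all limit arguments through resolvent functionals $\int_0^\infty e^{-\alpha s}f(Y_s)\,ds$ is exactly the device that makes the cited proof work under the Engelbert hypotheses.
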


\begin{proof}
The proof follows immediately from the results proved in \cite{BG68} Sections 1.6--1.8. 
We only remark that in Theorem 1.8.11 of \cite{BG68} we must replace his $\mathbb L$ by our
class $\R$. Then in his proof, the monotone class theorem can still be used since
$\sigma(\R) = \B$. 
\end{proof}

Formula \eqref{3.15} is the {\em strong Markov property}. 

\section{Examples}

\subsection{Uniform Motion.} \label{4.1} 
Let $(E,\B) = (\reals,\B(\reals))$ and let $p_t$ be the transition function 
for uniform motion: 
\[
    p_t(x,B) = 1_B(x-t). 
\]
Then 
%\begin{eqnarray*}
%    P_t f(x) & = & f(x-t)  \\
%    U^{\alpha}f(x) & = & e^{-\alpha x} \int_{-\infty}^{x} e^{\alpha y} f(y)dy. 
%\end{eqnarray*}
\[
    P_t f(x) = f(x-t)  
\]
and
\[
    U^{\alpha}f(x) = e^{-\alpha x} \int_{-\infty}^{x} e^{\alpha y} f(y)dy. 
\]
It is clear that, for every $f \in \B$, $U^{\alpha}f \in C(\reals)$. Since
$C(\reals)$ is a linear lattice, it follows that $\R \subset C(\reals)$. 

Now suppose that $g \in C^1(\reals)$ and put $f = g' + \alpha g$. Then, a simple calculation shows
that $U^{\alpha}f = g$. 
Hence, $C^1(\reals) \subset \C \subset \R \subset C(\reals)$. 
The classes $C^1(\reals)$ and $C(\reals)$ generate
the same uniformity and this uniformity is also generated by $\E = \{ e(x) = x \}$.
Hence the intrinsic state space coincides with the real line. 

\subsection{Brownian Motion.} \label{4.2} 
Let $(E,\B) = (\reals^d, \B(\reals^d))$ and let $p_t$ be the transition function for 
$d$-dimensional Brownian motion: 
\[
    p_t(x,B)= \int p_t(y-x)1_B(y)dy 
\]
where 
\[
    p_t(y) = (2 \pi t)^{-d/2} \exp\{ -|y|^2/2t \} .
\]
Then 
\[
    P_t f(x) = \int p_t(y-x)f(y)dy. 
\]
It is easy to check that for every $\epsilon > 0$ there exists a $K < \infty$ such that: 
\[
    | p_t(y-x) - p_t(y-x_0) | \le K p_{2t}(y-x_0),
\]
whenever $| x - x_0 | < \epsilon$. Hence, the dominated convergence theorem shows that for
every 
$f \in \B$, $P_t f$ belongs to $C(\reals^d)$. Put $g = U^{\alpha} f$. 
By Proposition \ref{2.10}-(i), $\lim_{t \downarrow 0} \| P_t g -g \| = 0$. 
Since $x \rightarrow P_t g(x)$ is continuous, it follows that $g$ belongs to
$C(\reals^d)$.
Thus, $\C \subset C(\reals^d)$ and since $C(\reals^d)$ is a linear lattice we have 
$\C \subset \R \subset  C(\reals^d)$. 

Let $g$ be a function of class $C^2(\reals^d)$ and let 
$f = -\frac{1}{2} \Delta g + \alpha g$ where 
$\Delta = \frac{\partial^2}{\partial x_1^2} + \cdots + \frac{\partial^2}{\partial x_d^2}$
is the Laplacian in $\reals^d$. If we integrate by parts (in $x$), then
use the fact 
that 
$\frac{1}{2} \Delta p_t = \frac{d}{dt} p_t$,
and integrate by parts again (in $t$), we get 
\[
    \int_s^{\infty} e^{-\alpha t}P_t f(x) dt = e^{-\alpha s} \int p_s (y-x) g(y) dy. 
\]
Letting $s \downarrow 0$ and using the continuity of $g$ we get $U^{\alpha} f = g$. 
Hence, $C^2(\reals^d) \subset \C \subset \R \subset C(\reals^d)$. 
Since the classes $C^2(\reals^d)$ and $C(\reals^d)$ generate the same uniformity on
$\reals^d$ we see that we may take 
\[
    \E = \{ e_1(x) = x_1, \; e_2(x) = x_2, \; \ldots, \; e_d(x) = x_d \} .
\]
Hence the intrinsic state space coincides with $\reals^d$. 

\subsection{Pure Jump Process.} \label{4.3} 
Let $(E,\B)$ be a general state space, let $q$ be a stochastic kernel on $(E, \B)$
and put 
\[
    p_t(x,B) 
	= \sum_{n=0}^{\infty} e^{-\lambda t} \frac{(\lambda t)^n}{n!} q^{(n)}(x,B) .
\]
Then 
\[
    P_t f(x) = 
      \sum_{n=0}^{\infty} e^{-\lambda t} \frac{(\lambda t)^n}{n!} Q^n f(x) 
\]
and
\[
    U^{\alpha} f(x) =
      \frac{1}{\alpha+\lambda} \sum_{n=0}^{\infty} 
	\left( \frac{\lambda}{\alpha+\lambda} \right)^n Q^n f(x)
\]
where $Q$ is the operator on $\B$ corresponding to $q$. 

Suppose that $g \in \B$ and put $f = \lambda (1-Q) g + \alpha g$. 
Then a simple calculation shows that $U^{\alpha}f = g$. 
Hence $\B \subset \C$ and so it follows that $\B = \C = \R$. The class $\B$ generates a
finite dimensional uniformity if and only if $E$ is a countable set. In this case,
if points are measurable, then we can take $E = \{ e(x_n) = n \}$ which maps $E$
into the set of nonnegative integers. 

\subsection{Uniform Motion with Sticky Origin.} \label{4.4} 
Let $(E,\B) = (\reals, \B(\reals))$ and let $p_t$ be the transition function for 
uniform motion with ``stickum'' at zero (discussed in the Introduction): 
\[
    p_t(x,B) =
	\left\{
	    \begin{array}{ll}
	        \ds e^{x-t} 1_B(0) + \int_0^{t-x} e^{-u} 1_B(x-t+u) du & 
		    \qquad 0 \le x < t \\[0.2in]
		\ds 1_B(x-t) & \qquad \text{else.}
	    \end{array}
	\right.
\]
Then 
\[
    U^{\alpha}f(x) =
	\left\{
	    \begin{array}{ll}
	        \ds \frac{e^{-\alpha x}}{1+\alpha} 
		\left[ f(0) + \int_{-\infty}^0 e^{\alpha t} f(t) dt
		    +(1+\alpha) \int_0^x e^{\alpha t} f(t) dt \right] 
		    & \qquad x \ge 0 \\[0.2in]
		\ds e^{-\alpha x} \int_{-\infty}^x e^{\alpha t} f(t) dt
		    & \qquad x < 0.
	    \end{array}
	\right.
\]
It is easy to see from this formula that if $f \in \B$ then $U^{\alpha} f$ belongs to
$C((-\infty,0),[0,\infty))$. Since this last class is a linear lattice, it
follows that $\R \subset C((-\infty,0),[0,\infty))$.

Now suppose that $g \in C^1((-\infty,0),[0,\infty))$ and $g'(0) = 0$ 
(this is the right-hand derivative at zero). 
Put $f = g' + \alpha g$. Then a simple integration by parts shows
that $U^{\alpha} f = g$. Hence,
\[
    C^1((-\infty,0),[0,\infty)) \cap \{ h \; | \; h'(0) = 0 \}
    \; \subset \; \C \; \subset \; \R \; \subset \;
    C((-\infty,0),[0,\infty)) 
\]
where by $h'(0)$ we mean the right-hand derivative at zero. It is easy to see
that we can take $\E$ consisting of one function 
\[
    e(x) = 
        \left\{
	    \begin{array}{cl}
	        x    & \qquad x \ge 0, \\
	        x -1 & \qquad x  <  0.
	    \end{array}
	\right.
\]
This function makes small negative numbers ``far away'' from small positive
numbers. Also the space $\psi(E) = (-\infty,-1) \cup [0, \infty)$ is not 
closed and so the intrinsic state space 
$E' = (-\infty,-1] \cup [0, \infty)$ 
turns out to have one point more than $E$. 

\subsection{Uniform Motion with a Jump.} \label{4.5}
Let $E = (-\infty,-1] \cup [0,\infty) = E^- \cup E^+$ and let $\B$ be the Borel
$\sigma$-algebra on $E$.  Let $p_t$, be ``severed'' uniform motion: 
\[
    p_t(x,B) =
        \left\{
	    \begin{array}{ll}
	        1_B(x-t-1) & \qquad 0 \le x < t, \\
	        1_B(x-t)   & \qquad \text{else}.
	    \end{array}
	\right.
\]
Then 
\[
    P_t f(x) = 
        \left\{
	    \begin{array}{ll}
	        f(x-t-1) & \qquad 0 \le x < t, \\
	        f(x-t)   & \qquad \text{else},
	    \end{array}
	\right.
\]
and 
\[
    U^{\alpha} f(x) =
        \left\{
	    \begin{array}{ll}
	        \ds e^{-\alpha x} 
		\left[ 
		   e^{\alpha} \int_{-\infty}^{-1} e^{\alpha y} f(y) dy
		   + 
		   \int_{0}^{x} e^{\alpha y} f(y) dy
		\right]
		    & \qquad x \in E^+, \\[0.2in]
		\ds e^{-\alpha x} \int_{-\infty}^{x} e^{\alpha y} f(y) dy
		    & \qquad x \in E^- .
	    \end{array}
	\right.
\]
Note that $U^{\alpha} f(0) = U^{\alpha} f(-1)$. 
Put $I_{-1,0} = \{ h \; | \; h(-1) = h(0) \}$. 
We see that if $f \in \B$ then $U^{\alpha}$ belongs to $C(E) \cap I_{-1,0}$. 

Now suppose that $g \in C^1(E) \cap I_{-1,0}$ and put $f = g' + \alpha g$. 
Again, integrating by parts shows immediately that $g = U^{\alpha} f$. 
Hence 
\[
    C^1(E) \cap I_{-1,0} \; \subset \; \C \; \subset \; \R \; \subset \; C(E) \cap I_{-1,0} .
\]
In this example, $\R$ does not separate $-1$ and $0$. We may choose for $\E$ the
collection consisting of the single function 
\[
    e(x) = 
        \left\{
	    \begin{array}{ll}
	        x   & \qquad x \in E^+ \\
	        x+1 & \qquad x \in E^- .
	    \end{array}
	\right.
\]
On the intrinsic state space the process is just uniform motion. 

\subsection{Uniform Motion with Fork in the Road} \label{4.6}
In $\reals^2$, let $R$ denote the closed right half-line on the $x$-axis, let
$U$ denote
the open upper half-line on the $y$-axis, and let $L$ denote the open lower
half-line on the $y$-axis. Let $E = R \cup V \cup L$ and let $\B$ be the Borel
$\sigma$-algebra on $E$.  Let $p_t$ be the 
transition function for the process that proceeds deterministically left, up,
or down on $R$, $U$, or $L$, respectively. When it arrives at the origin, it proceeds
upward/downward each with probability $1/2$. Then 
\[
    P_t f(x,y) =
        \left\{
	    \begin{array}{ll}
	        f(0,y+t) & \qquad (x,y) \in U, \\
	        f(0,y-t) & \qquad (x,y) \in L, \\
	        f(x-t,0) & \qquad (x,y) \in R, \; 0 < t \le x, \\
	        \frac{1}{2} f(0,t-x) + \frac{1}{2} f(0,x-t) 
		  & \qquad (x,y) \in R, \; 0 \le x < t, \\
	    \end{array}
	\right.
\]
and
\[
    U^{\alpha} f(x,y) =
        \left\{
	    \begin{array}{ll}
		\ds e^{\alpha y} \int_{y}^{\infty} e^{-\alpha t} f(0,t) dt
			& \; (x,y) \in U , \\[0.2in]
		\ds e^{-\alpha y} \int_{-\infty}^{y} e^{\alpha t} f(0,t) dt
			& \; (x,y) \in L , \\[0.2in]
		\ds e^{-\alpha x} \int_{0}^{x} e^{\alpha t} f(t,0) dt \\[0.2in]
		  \ds \quad + \frac{1}{2} e^{-\alpha x} \int_0^{\infty} e^{\alpha t}
		    \left[ f(0,t) + f(0,-t) \right] dt
			& \; (x,y) \in R . 
	    \end{array}
	\right.
\]
From this formula for $U^{\alpha}$, we see that 
$\C \; \subset \; \R \; \subset \; C(R,U,L)$. 

Now suppose that $g \in C^1(R,U,L)$ and 
$g(0,0) = \frac{1}{2} \left[g(0,0+) + g(0,0-)\right]$. Put 
\[
    f = 
      \left\{
	\begin{array}{rl}
	  \ds -\frac{\partial g}{\partial x_2} + \alpha g   & \quad (x,y) \in U \\[0.2in]
	   \ds \frac{\partial g}{\partial x_2} + \alpha g   & \quad (x,y) \in L \\[0.2in]
	   \ds \frac{\partial g}{\partial x_1} + \alpha g   & \quad (x,y) \in R .
	\end{array}
      \right.
\]
Then, as in the previous examples, simple calculations show
that $g = U^{\alpha} f$.  Hence $g \in \C$. Since the collection of all such
functions $g$ generates the same uniformity as $C(R,U,L)$ does, 
we see that we can take $E = \{ e_1(x,y), e_2(x,y) \}$ where 
\begin{eqnarray*}
    e_1(x,y) & = & 1_R(x,y) \\
    e_2(x,y) & = &
    	\left\{
	    \begin{array}{cl}
	        y+1 & \quad (x,y) \in U \\
	        y-1 & \quad (x,y) \in L \\
	          0 & \quad (x,y) \in R.
	    \end{array}
	\right.
\end{eqnarray*}
Hence $\psi(E) = \{ (x,y) \; | \; x \ge 0, y = 0 
                      \text{ or } x = 0, y > 1 
		      \text{ or } x = 0, y < -1 \}$ 
and so the
closure $\E^1 = \overline{\psi(E)}$ contains two new points: $(0,1)$ and $(0,-1)$. Since the
intrinsic strong Markov process is right continuous we see that, with
probability one, it visits one of these two new points and it never visits
the branch point $(0,0)$. 

\subsection{Brownian Motion with an Absorbing State.} \label{4.7} 
Let $(E,\B) = (\reals^d, \B(\reals^d))$ and let $p_t$ be the transition function for
$d$-dimensional Brownian motion except it is altered at $x = 0$ as follows: 
\[
    p_t(x,B) =
        \left\{
	    \begin{array}{ll}
	        \int p_t(y-x) 1_B(y) dy & \quad x \ne 0 \\
	        1_B(0)                  & \quad x  =  0 
	    \end{array}
	\right.
\]
where $p_t(y)$ is as defined in Subsection \ref{4.2}. Since the origin is a set of measure
zero, it is easy to check that this $p_t$ is a Markov transition function. It is
also easy to see that $\R$ coincides with the $\R$ for Brownian motion except that the
value of each function at zero can be changed arbitrarily. Hence we may take 
\[
    \E = \{ 
    e_1(x) = x_1, \; 
    e_2(x) = x_2, \; \ldots, \;
    e_d(x) = x_d, \; 
    e_{d+1}(x) = 1_0(x)
    \} .
\]
Hence the intrinsic state space is a closed subset of $\reals^{d+1}$ that consists of
two parts: the $d$-dimensional hyperplane corresponding to $x_{d+1} = 0$ and the
single point corresponding to $x_{d+1} = 1$ with the rest of the coordinates
vanishing. On the hyperplane the transition function is that for ordinary
Brownian motion. The single point is an absorbing point. 

\subsection{Instantaneous Jump to Limiting Distribution.} \label{4.8} 
Let $(E,\B)$ be a general measurable space and let $p_t(x,B) = \pi(B)$ where
$\pi$ is a probability measure on $(E,\B)$. Then 
\[
    P_t f(x) = \int f(y) \pi(dy) 
\]
and
\[
    U^{\alpha} f(x) = \frac{1}{\alpha} \int f(y) \pi(dy) .
\]
Hence, for $f \in \B$, $U^{\alpha}f$ is a constant function. Hence, both $\C$
and $\R$ coincide with the collection of all constant functions. 
In this example, every point gets
identified as the same. We may take $\E$ consisting of one function 
\[
    e(x) = 1. 
\] 
In this case, the entire state space collapses to a single point.

\section{Appendix}

In this section, we collect certain results that are either well-known or
secondary to the main theme of the paper. 

\begin{prop} \label{A.1}
Let $E$ be a closed subset of $\reals^d$ and let $\HH$ be a class of
continuous real-valued functions defined on $E$. Then the $\sigma$-algebra generated by
$\HH$ coincides with the Borel $\sigma$-algebra on $E$ if and only if $\HH$
separates points in $E$. 
\end{prop}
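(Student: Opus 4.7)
My plan is to handle the two implications separately, with the forward implication being essentially formal and the reverse requiring an appeal to descriptive set theory.

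For the easy direction, suppose $\sigma(\HH) = \B(E)$. If $\HH$ failed to separate points, there would be distinct $x,y \in E$ with $h(x) = h(y)$ for every $h \in \HH$, and a routine induction on the construction of $\sigma(\HH)$ (starting from the $\pi$-system of preimages $h^{-1}(B)$, $h \in \HH$, $B$ Borel) would show that every $A \in \sigma(\HH)$ contains both of $x, y$ or neither. Since $E$ is metrizable, the singleton $\{x\}$ is Borel, a contradiction.

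For the harder direction, assume $\HH$ separates points. The inclusion $\sigma(\HH) \subset \B(E)$ is automatic because each $h \in \HH$ is continuous, hence Borel measurable. To obtain $\B(E) \subset \sigma(\HH)$, I would first extract a countable subfamily of $\HH$ that still separates points. Since $E$ is a closed subset of $\reals^d$, the space $E \times E$ is second countable and therefore Lindel\"of; the open sets $U_h = \{(x,y) : h(x) \ne h(y)\}$, $h \in \HH$, cover $(E \times E) \setminus \Delta$ by the separation hypothesis, so one can extract a countable subcover, producing a countable subfamily $\{h_n\}_{n \ge 1} \subset \HH$ that separates points.

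Next, bundle these into a map $\Phi \colon E \to \reals^{\mathbb{N}}$ by $\Phi(x) = (h_n(x))_{n \ge 1}$. This is continuous for the product topology on $\reals^{\mathbb{N}}$ and injective by the previous step. Since $E$ is Polish (as a closed subset of $\reals^d$) and $\reals^{\mathbb{N}}$ is Polish, Kuratowski's theorem on continuous injections between Polish spaces implies that $\Phi$ is a Borel isomorphism of $E$ onto $\Phi(E)$; in particular $\sigma(\Phi) = \B(E)$. But $\sigma(\Phi)$ is generated by the coordinate functions $h_n$, so $\B(E) = \sigma(\{h_n\}_{n \ge 1}) \subset \sigma(\HH)$.

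The only non-routine ingredient is the invocation of Kuratowski's theorem in the last step; this is the real content of the proposition, encoding the fact that countably many continuous functions separating points already recover the full Borel structure. An elementary workaround would require showing directly that the initial topology generated by $\{h_n\}_{n \ge 1}$ on $E$ coincides with the Euclidean topology, which is essentially Kuratowski's theorem in disguise. Given that the main text freely cites Dellacherie--Meyer, a citation to the Kuratowski/Lusin--Souslin results there would be the natural route.
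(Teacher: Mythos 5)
Your argument is correct, but it is a genuinely different proof from the one in the paper. The paper's proof of the hard direction is entirely elementary and self-contained: it exploits the fact that a closed subset of $\reals^d$ is $\sigma$-compact, and for an arbitrary point $a$ and radii $0<r_0<r_1$ it manufactures a closed set in $\sigma(\HH)$ sandwiched between $B_{r_0}(a)$ and $B_{r_1}(a)$ by separating each point of the compact inner ball from each point of the complement with a sublevel set of some $f\in\HH$, then taking finite subcovers of the inner ball and of the compact annuli exhausting the complement; letting $r_1\downarrow r_0$ shows every closed ball lies in $\sigma(\HH)$, and these generate $\B(E)$. You instead extract a countable separating subfamily via the Lindel\"of property of $(E\times E)\setminus\Delta$ and then invoke the Lusin--Souslin/Kuratowski theorem that a continuous injection between Polish spaces is a Borel isomorphism onto its (Borel) image. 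What your route buys is generality --- it works verbatim for any Polish space $E$, not just closed subsets of $\reals^d$ --- at the cost of outsourcing the real content to a nontrivial piece of descriptive set theory, whereas the paper's argument needs only Heine--Borel. One caveat on your closing remark: the ``elementary workaround'' you sketch is not available in general, because the initial topology induced by $\{h_n\}$ need \emph{not} coincide with the Euclidean topology on $E$ (take $E=\mathbb{Z}$ and a single continuous injection enumerating the rationals in $[0,1]$); only the Borel structures agree, which is precisely why the Lusin--Souslin theorem, rather than a topological identification, is the right tool. This does not affect the validity of your main argument.
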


\begin{proof}
The ``only if'' direction is trivial. Suppose that $\HH$ separates points in $E$.
For $a \in \reals^d$ and $r > 0$, let $B_r(a)$ denote the intersection of $E$ and the closed
ball of radius $r$ centered at $a$. Fix $a \in E$ and $0 < r_0 < r_1$. Put 
$A_n = \{ x \in E \; | \; n r_1 \le |x-a| \le (n + 1)r_1 \}$. 
Fix $x \in B_{r_0}(a)$ and $y \in B_{r_0}(a)^c$. Let $f$ be a function in $\HH$ for
which $f(x) \ne f(y)$. Suppose that $f(x) < f(y)$.
Pick $s$ such that $f(x) < s < f(y)$ and let $V_{x,y} = f^{-1}((-\infty,s))$ and 
$W_{x,y} = f^{-1}((-\infty,s])$.  Then, in the relative topology, $V_{x,y}$ is
open, $W_{x,y}$ is closed, $x \in V_{x,y} \subset W_{x,y}$ and 
$y \in W_{x,y}^c$.  The collection $\{ V_{x,y} \; | \; x \in B_{r_0}(a) \}$
forms an open cover of the compact set $B_{r_0}$ and hence there is a finite
subcover $V_{x_1,y}, \ldots, V_{x_n,y}$.  Put $F_y = \cup_{i=1}^n W_{x_i,y}$.
Then $F_y$ is a closed set, $y \in F_y^c$ and $B_{r_0}(a) \subset F_y$.  Also,
$F_y \in \sigma(\HH)$.

Now fix $n \ge 1$. The collection $\{ F_y^c \; | \; y \in A_n \}$ forms an open 
cover of the compact set $A_n$. 
Hence there is a finite subcover $\{ F_{y_{j,n}}^c \; | \; j = 1,2,\ldots,k_n \}$. 
The collection 
$\{ F_{y_{j,n}}^c \; | \; j = 1,2,\ldots,k_n; \; n = 1,2,\ldots \}$
is a countable collection that covers $\{ x \; | \; |x-a| \ge r_1 \}$.
Hence,
\[
    \ds
    B_{r_0}(a) \; \subset \; \cap_{j,n} F_{y_{j,n}} \; \subset \; B_{r_1}(a)
\]
and $\cap_{j,n} F_{y_{j,n}}$ is evidently $\sigma(\HH)$-measurable and closed.
Since $r_0$ and $r_1$ were arbitrary, we see that for any $0 < r_0 < r_1$ we can
find a closed $\sigma(\HH)$-measurable set $G$ such that 
$B_{r_0}(a) \subset G \subset B_{r_1}(a)$.

Now fix $r > 0$ and put $B = B_r(a)$ and $B_n = B_{r+1/n}(a)$.  There exist
closed $\sigma(\HH)$ measurable sets $G_n$ such that 
$B \subset G_n \subset B_n$.
Hence $B \subset \cap_n G_n \subset \cap_n B_n = B$ and so
$\ds B = \ds \cap_n G_n$ is a $\sigma(\HH)$-measurable set.  Since $a$ and $r$ were
arbitrary, we see that 
$\{ B_r(a) \; | \; a \in E, \; r > 0 \} \subset \sigma(\HH)$
and since these sets generate the Borel $\sigma$-algebra it follows that 
$B(E) \subset \sigma(\HH)$.   Since every function in $\HH$ is continuous, we
see that $\sigma(\HH) \subset \B(E)$.
\end{proof}

The following properties follow directly from the resolvent identity 
\eqref{2.5}. 

\begin{prop} \label{A.2}
Let $U^{\alpha}$ be a resolvent on $(E,\B)$. Then for each $f \in \B$ and 
$x \in E$, the map $\alpha \rightarrow U^{\alpha} f(x)$ 
is infinitely differentiable on $(0,\infty)$ and 
\[
    \left( \frac{d}{d \alpha} \right)^n U^{\alpha} f 
    	= (-1)^n n! (U^{\alpha})^{n+1} f 
\]
and 
\[
    \left( \frac{d}{d \alpha} \right)^n ( \alpha U^{\alpha} f )
    	= (-1)^{n+1} n! (U^{\alpha})^n \left[ 1 - \alpha U^{\alpha} \right] f .
\]
\end{prop}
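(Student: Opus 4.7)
The plan is to prove both formulas by induction on $n$, using the resolvent identity \eqref{2.5} (equivalently $U^{\beta} - U^{\alpha} = (\alpha - \beta) U^{\alpha} U^{\beta}$) as the only analytic input.

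For the base case, I would divide the resolvent identity by $\beta - \alpha$ to get the difference quotient $-U^{\alpha} U^{\beta} f$, and then apply the identity a second time to write $U^{\alpha} U^{\beta} f - (U^{\alpha})^2 f = (\alpha - \beta)(U^{\alpha})^2 U^{\beta} f$. The operator bound $\|U^{\beta}\| \le 1/\beta$ (a consequence of $\alpha U^{\alpha}$ being a stochastic kernel) keeps $(U^{\alpha})^2 U^{\beta} f$ uniformly bounded for $\beta$ near $\alpha$, so the remainder vanishes and $\frac{d}{d\alpha} U^{\alpha} f = -(U^{\alpha})^2 f$. This step simultaneously gives the continuity of $\beta \mapsto U^{\beta} f(x)$, which I will reuse later.

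For the inductive step, granting $\frac{d^n}{d\alpha^n} U^{\alpha} f = (-1)^n n! (U^{\alpha})^{n+1} f$, I must differentiate $(U^{\alpha})^{n+1} f$ in $\alpha$. Commutativity $U^{\alpha} U^{\beta} = U^{\beta} U^{\alpha}$ follows by subtracting the resolvent identity from its $(\alpha,\beta)$-swapped counterpart, so I can telescope
\[
   (U^{\beta})^{n+1} f - (U^{\alpha})^{n+1} f = \sum_{j=0}^{n} (U^{\beta})^{j} (U^{\beta} - U^{\alpha}) (U^{\alpha})^{n-j} f = (\alpha - \beta) \sum_{j=0}^{n} (U^{\beta})^{j+1} (U^{\alpha})^{n-j+1} f ,
\]
divide by $\beta - \alpha$, and pass to the limit (using the continuity established above) to obtain $\frac{d}{d\alpha}(U^{\alpha})^{n+1} f = -(n+1)(U^{\alpha})^{n+2} f$. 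Chaining this with the inductive hypothesis yields $\frac{d^{n+1}}{d\alpha^{n+1}} U^{\alpha} f = (-1)^{n+1}(n+1)!(U^{\alpha})^{n+2} f$, completing the induction.

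The second identity then drops out of the Leibniz rule applied to $\alpha \cdot U^{\alpha} f$: only the $k=0$ and $k=1$ contributions survive (since $\frac{d^j}{d\alpha^j}\alpha = 0$ for $j \ge 2$), and elementary algebra---factoring $(-1)^{n+1} n! (U^{\alpha})^n$ out of the two surviving terms---collapses the expression to $(-1)^{n+1} n! (U^{\alpha})^n [1 - \alpha U^{\alpha}] f$. The main obstacle is the very first differentiation: one really has to apply the resolvent identity twice (once to produce the difference quotient, once to control its remainder) and invoke the stochasticity bound on $\|U^{\beta}\|$; everything after that is bookkeeping with commutativity and the product rule.
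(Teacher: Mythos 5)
Your argument is correct, but it is worth pointing out that the paper does not actually prove Proposition \ref{A.2}: it simply defers to Getoor \cite{Get75}, p.~7. The standard argument there takes a slightly different route from yours: one iterates the resolvent identity to obtain the Neumann expansion $U^{\beta}f = \sum_{n\ge 0}(\alpha-\beta)^{n}(U^{\alpha})^{n+1}f$, convergent in norm for $|\alpha-\beta|<\alpha$ because $\|U^{\alpha}\|\le 1/\alpha$, and then differentiates the power series term by term; that gives real-analyticity (not merely infinite differentiability) and both displayed formulas in one stroke. Your induction on difference quotients reaches the same conclusions by more elementary means, and every step checks out: the double application of the resolvent identity in the base case, the commutativity $U^{\alpha}U^{\beta}=U^{\beta}U^{\alpha}$ obtained by swapping $\alpha$ and $\beta$ in the identity, the telescoping of $(U^{\beta})^{n+1}-(U^{\alpha})^{n+1}$, the uniform bound $\|U^{\beta}\|\le 1/\beta$ needed to pass to the limit, and the Leibniz computation for the second formula are all sound. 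One small caveat that concerns the statement rather than your proof: the second identity as printed fails at $n=0$ (the left side is $\alpha U^{\alpha}f$ while the right side is $\alpha U^{\alpha}f-f$), and consistently your Leibniz derivation requires $n\ge 1$ for the $k=1$ term to exist; the formula should be read as asserted for $n\ge 1$ only, which is how it is used in the proof of Theorem \ref{3.7}.
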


\begin{proof}
The proof can be found in Getoor \cite{Get75} page 7. The next two propositions are also
proved in \cite{Get75} on pages 5 and 12, respectively. 
\end{proof}

\begin{prop} \label{A.3}
If $f$ is $\beta$-supermedian, then the map 
$\alpha \rightarrow (\alpha-\beta)U^{\alpha}f(x)$ is increasing for 
$\alpha > \beta$ and the limit
$\widehat{f} = \lim_{\alpha \rightarrow \infty} \alpha U^{\alpha}f$ 
is $\beta$-supermedian. Also $\widehat{f} \le f$ 
and $U^{\alpha} \widehat{f} = U^{\alpha} f$ for all $\alpha > 0$.
\end{prop}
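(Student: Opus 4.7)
\medskip
\noindent\textbf{Proof proposal.} The plan is to get everything out of the resolvent identity \eqref{2.5} and the supermedian inequality, with monotone convergence as the analytic glue.

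First, I would establish monotonicity. Fix $\gamma > \alpha > \beta$. Writing the identity in the form $U^{\gamma} = U^{\alpha} - (\gamma-\alpha)U^{\alpha}U^{\gamma}$ and multiplying by $\gamma-\beta$, I compute
\[
(\gamma-\beta)U^{\gamma}f - (\alpha-\beta)U^{\alpha}f
 = (\gamma-\alpha)\,U^{\alpha}\bigl[f - (\gamma-\beta)U^{\gamma}f\bigr].
\]
The bracket is nonnegative because $f$ is $\beta$-supermedian, and $U^{\alpha}$ is a positive operator, so the difference is $\ge 0$. Thus $\alpha\mapsto(\alpha-\beta)U^{\alpha}f(x)$ is increasing on $(\beta,\infty)$. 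Since $\alpha U^{\alpha}1=1$ gives $(\alpha-\beta)U^{\alpha}f\le\alpha U^{\alpha}f\le\|f\|$, the pointwise increasing limit exists. Noting that $\beta U^{\alpha}f\le\beta\|f\|/\alpha\to 0$, I get that this limit equals $\lim_{\alpha\to\infty}\alpha U^{\alpha}f$, which I call $\widehat f$.

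The bound $\widehat f\le f$ is immediate: for any $\alpha>\beta$, $(\alpha-\beta)U^{\alpha}f\le f$, and passing to the limit gives $\widehat f\le f$.

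Next, the identity $U^{\alpha}\widehat f = U^{\alpha}f$. Fix $\alpha>0$ and let $\gamma\to\infty$ through $\gamma>\alpha$. The functions $(\gamma-\beta)U^{\gamma}f$ increase to $\widehat f$, so by monotone convergence $U^{\alpha}\bigl[(\gamma-\beta)U^{\gamma}f\bigr]\uparrow U^{\alpha}\widehat f$. On the other hand, the resolvent identity gives $U^{\alpha}U^{\gamma}f = (U^{\alpha}f - U^{\gamma}f)/(\gamma-\alpha)$, whence
\[
(\gamma-\beta)\,U^{\alpha}U^{\gamma}f
 = \frac{\gamma-\beta}{\gamma-\alpha}\bigl[U^{\alpha}f - U^{\gamma}f\bigr]
 \;\longrightarrow\; U^{\alpha}f
\]
as $\gamma\to\infty$, since $U^{\gamma}f\to 0$ and the prefactor tends to $1$. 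Hence $U^{\alpha}\widehat f = U^{\alpha}f$.

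Finally, that $\widehat f$ is $\beta$-supermedian. For $\gamma\le\beta$ the inequality $(\gamma-\beta)U^{\gamma}\widehat f\le\widehat f$ is trivial since the left side is nonpositive and $\widehat f\ge 0$. For $\gamma>\beta$, apply the previous step with $\alpha$ replaced by $\gamma$ to get $U^{\gamma}\widehat f = U^{\gamma}f$; then
\[
(\gamma-\beta)U^{\gamma}\widehat f = (\gamma-\beta)U^{\gamma}f \le \widehat f,
\]
the last inequality being monotonicity of $(\alpha-\beta)U^{\alpha}f$ together with the fact that $\widehat f$ is the supremum of these.

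The only delicate step is the identity $U^{\alpha}\widehat f=U^{\alpha}f$: one must simultaneously use monotone convergence inside the operator $U^{\alpha}$ and the resolvent identity to rewrite $U^{\alpha}U^{\gamma}f$ in a form whose limit can actually be evaluated. Everything else is bookkeeping around the supermedian inequality.
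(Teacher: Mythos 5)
Your proof is correct. Note that the paper itself gives no argument for Proposition \ref{A.3} --- it simply refers the reader to Getoor's monograph --- so there is no in-paper proof to compare against; your derivation is the standard one, and every step checks out: the telescoping identity $(\gamma-\beta)U^{\gamma}f-(\alpha-\beta)U^{\alpha}f=(\gamma-\alpha)U^{\alpha}\bigl[f-(\gamma-\beta)U^{\gamma}f\bigr]$ gives monotonicity, the bound $\alpha U^{\alpha}f\le\|f\|$ gives existence of the limit, and the combination of monotone convergence with $U^{\alpha}U^{\gamma}f=(U^{\alpha}f-U^{\gamma}f)/(\gamma-\alpha)$ correctly yields $U^{\alpha}\widehat f=U^{\alpha}f$, from which the supermedian property of $\widehat f$ follows as you say.
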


%REFERENCES 
%
%[1] 	Blumenthal, R. M. and Getoor, R. K. (1968) Markov Processes and Potential
%Theory. Academic Press, New York. 
%
%[2] 	Dellacherie, C. and Meyer, P. A. (1978). Probabilities and Potential.
%Hermann, Paris. 
%
%[3] 	Englbert, H. J. (1977) Markov processes in general state spaces. Parts
%I-VIT. Math. Nachr. 80, 19-36; 82, 191-203; 83, 47-71; 84, 277-300; 85,
%	111-130; 85, 235-266; 86, 67-83. 
%
%[4] 	Feller, W. (1971). Introduction to Probability Theory and its
%Applications. Vol. 2, Second Edition. Wiley, New York. 
%
%[5] 	Getoor, R. K. (1975) Markov Processes: Ray Processes and Right
%Processes, Lecture Notes in Mathematics 440. Springer-Verlag,
%	Berlin-Heidelberg-New York. 
%
%[6] 	Kelley, 1. L. (1955) General Topology. Van Nostrand; Springer. 
%
%[7] 	Knight, F. (1965). Note on regularization of Markov-processes. Ill.
%Journ. Math. 9, 548-552. 
%
%[8] 	Ray, D. (1959). Resolvents, transition function, and strongly Markovian
%processes. Ann. Math . 70, 43-75. 
%
%[9] 	Yushkevich, A. A. (1957). On strong Markov processes. Thy. Prob. Appl.
%2, 181--205. 

\vfill
\pagebreak
\bibliographystyle{plain}
\bibliography{../lib/refs}

%\begin{figure}[p]
%\caption{}
%\label{f:mr}
%\end{figure}

\end{document}